%-------------------------------------------------------------------------
%AMS-LaTeX Paper *********************************************************
%-------------------------------------------------------------------------
\documentclass[a4paper,11pt,leqno]{amsart}
\usepackage{amsmath}
\usepackage{amsthm}
\usepackage{amsfonts}
\usepackage{amssymb}

% ENVIRONMENTS ----------------------------------------------------------
\newtheorem{theorem}{Theorem}[section]{\bf}{\it}
\newtheorem{mainth}{Theorem}{\bf}{\it}
\newtheorem{lemma}[theorem]{Lemma}{\bf}{\it}
{\bf}{\it}
{\bf}{\it}
{\bf}{\it}

\theoremstyle{definition}

\theoremstyle{remark}

\numberwithin{equation}{section}

% MATHEMATICAL SYMBOLS --------------------------------------------------

 % d for forms

\newcommand{\cE}{\mathcal{E}}

\newcommand{\bR}{\mathbb R}
\newcommand{\bC}{\mathbb C}
\newcommand{\bN}{\mathbb N}
\newcommand{\bZ}{\mathbb{Z}}

\newcommand{\loc}{{\operatorname{loc}}}

% Joitakin komentoja ----------------------------------------------------

\newcommand{\bS}{\mathbb{S}}

\newcommand{\bM}{\mathbb{M}}

\newcommand{\R}{\mathbb R}
\newcommand{\cF}{\mathcal F}

\begin{document}

\title[Accumulation of periodic points]{Accumulation of periodic points for local uniformly quasiregular mappings}

\date{\today}

\author{Y\^usuke Okuyama}
\address{Division of Mathematics, Kyoto Institute of Technology, Sakyo-ku, Kyoto 606-8585, Japan
}
\email{okuyama@kit.ac.jp}
\thanks{Y. O. is partially supported by JSPS Grant-in-Aid for Young Scientists (B), 24740087.}

\author{Pekka Pankka}
\address{University of Helsinki, Department of Mathematics and Statistics (P.O. Box 68), FI-00014 University of Helsinki, Finland}
\email{pekka.pankka@helsinki.fi}
\thanks{P. P. is partially supported by the Academy of Finland project \#256228.}

\subjclass[2010]{Primary 30C65; Secondary 37F10, 30D05}

\keywords{local uniformly quasiregular mapping, 
repelling periodic point, Julia set} %optional

\begin{abstract}
We consider accumulation of periodic points in local uniformly quasiregular dynamics. Given a local uniformly quasiregular mapping $f$ with a countable and closed set of isolated essential singularities and their accumulation points on a closed Riemannian manifold, we show that points in the Julia set are accumulated by periodic points. If, in addition, the Fatou set is non-empty and connected, the accumulation is by periodic points in the Julia set itself. We also give sufficient conditions for the density of repelling periodic points.
\end{abstract}

\maketitle

\section{Introduction}
\label{sec:intro}

Let $M$ and $N$ be oriented Riemannian $n$-manifolds for $n\ge 2$. A continuous mapping $f\colon M \to N$ is called \emph{$K$-quasiregular}, $K\ge 1$, if $f$ belongs to the Sobolev space $W^{1,n}_\loc(M,N)$ and satisfies the distortion inequality
\[
\|df\|^n \le K\det df \quad \mathrm{a.e.\ on}\ M,
\]
where $\|df\|$ is the operator norm of the differential $df$ of $f$. 

A quasiregular self-map $f\colon M\to M$ is called \emph{uniformly $K$-quasiregular ($K$-UQR)} if all iterates $f^k$ for $k\ge 1$ are $K$-quasiregular. Similarly as quasiregular mappings have the r\^ole of holomorphic mappings in the $n$-dimensional Euclidean conformal geometry for $n\ge 3$, the dynamics of uniformly quasiregular mappings can be viewed as the counterpart of holomorphic dynamics in the $n$-dimensional conformal geometry. We refer to the seminal paper of Iwaniec and Martin \cite{Iwaniec-Martin} and Hinkkanen, Martin, Mayer \cite{HMM04} for the fundamentals in this theory.

In this article we consider dynamics of local UQR-mappings. Let $M$ be an oriented Riemannian $n$-manifold and $\Omega\subset M$ an open set. Following the terminology in \cite{HMM04},
we say a mapping $f\colon \Omega\to M$ 
is a {\itshape local uniformly $K$-quasiregular}, $K\ge 1$, 
if for every $k\in\bN$, $\bigcap_{j=0}^{k-1}f^{-j}(\Omega)\neq\emptyset$ and
$f^k\colon \bigcap_{j=0}^{k-1}f^{-j}(\Omega)\to M$ is $K$-quasiregular. 

With slight modifications, the standard terminology from dynamics is at our disposal also in this local setting. Let
\begin{gather*}
 D_f:=\text{the interior of }\bigcap_{k\ge 0}f^{-k}(\Omega)
=M\setminus\overline{\bigcup_{k\ge 0}f^{-k}(M\setminus \Omega)}.
\end{gather*}
As usual, the Fatou set $F(f)$ of $f$ is 
the maximal open subset in $D_f$ where
the family $\{f^k;k\in\bN\}$ is normal, the Julia set of $f$ is the set
\begin{gather*}
 J(f):=M\setminus F(f),
\end{gather*}
and the exceptional set of $f$ is
\begin{gather*}
 \cE(f):=\{x\in M;\#\bigcup_{k\ge 0}f^{-k}(x)<\infty\}.
\end{gather*}

A point $x\in M$ is a {\itshape periodic point of $f$ in $M$} if $x\in\bigcap_{j=0}^{p-1}f^{-j}(\Omega)$ and $f^p(x)=x$ for some $p\in\bN$. We call $p$ a {\itshape period of $x$ (under $f$)}. Note that periodic points always belong to the set $\overline{D_f}$. 

A periodic point $x\in M$ with period $p\in \bN$ is {\itshape (topologically) repelling} if $f\colon U\to f^p(U)$ is univalent and $U\Subset f^p(U)$
for some open neighborhood $U$ of $x$ in $\bigcap_{j=0}^{p-1}f^{-j}(\Omega)$. 
Note that, then $x\in J(f)$; see \cite[\S 4]{HMM04}.

In \cite{HMM04}, Hinkkanen, Martin and Mayer 
gave a classification of
cyclic Fatou components of $f$ (see Theorem \ref{th:classification}) 
as well as periodic points.
We study both $J(f)$ and $\cE(f)$ for
a non-constant local uniformly quasiregular mapping
\begin{gather*}
 f\colon \bM\setminus S_f\to\bM,
\end{gather*}
where $\bM$ is a closed, oriented, and connected Riemannian $n$-manifold, $n\ge 2$, and $S_f$ is a countable and closed subset in $\bM$ 
consisting of isolated essential singularities of $f$ and their accumulation points in $\bM$. In our first main theorem, we also consider a sub-class of non-elementary UQR-mappings. A non-constant local uniformly quasiregular mapping $f\colon \bM\setminus S_f\to\bM$ is {\itshape non-elementary} if it is non-injective and satisfies
\begin{gather*}
 J(f)\not\subset\cE(f). 
\end{gather*}
For comments on the non-injectivity and non-elementarity, see Section \ref{sec:discussion}.

Recall that a point $x$ in a topological space $X$
is {\itshape accumulated by} a subset $S$ in $X$
if for every neighborhood $N$ of $x$, $S\cap(N\setminus\{x\})\neq\emptyset$,
and that a subset $S$ in $X$ is {\itshape perfect}
if $S$ is non-empty, compact, and has no isolated points in $X$.

\begin{mainth}\label{th:periodic}
Let $\bM$ be a closed, oriented, and connected Riemannian $n$-manifold, $n\ge 2$, and $f \colon \bM\setminus S_f \to\bM$ a non-constant local uniformly 
$K$-quasiregular mapping, $K\ge 1$, where $S_f$ is a countable and closed subset in $\bM$ and consists of isolated essential singularities of $f$ and their accumulation points in $\bM$. 
Then $J(f)$ is nowhere dense in $\bM$ unless $J(f)=\bM$. Furthermore, the following hold:

\begin{itemize}
\item[(a)] If $f$ is non-injective, then $J(f)\neq\emptyset$ and $\#\cE(f)<\infty$. Moreover, for every $x\in\bM\setminus\cE(f)$, points in $J(f)$ are accumulated by $\bigcup_{k\ge 0}f^{-k}(x)$.

\item[(b)] If $f$ is non-injective and $S_f=\emptyset$, then $\cE(f)\subset F(f)$ and $f$ is non-elementary. 

\item[(c)] If $f$ is {\itshape a priori} non-elementary, then $J(f)$ is perfect and points in $J(f)$ are accumulated by periodic points of $f$. 
\end{itemize}
\end{mainth}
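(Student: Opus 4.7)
All three parts rest on Rickman's Picard theorem and its Montel-type corollary: there exists a constant $q=q(n,K)$ such that any family of $K$-quasiregular mappings from a domain in $\bM$ into $\bM$ that jointly omits at least $q$ distinct values is normal. The preliminary claim that $J(f)$ is either nowhere dense or equal to $\bM$ follows from a standard argument: if $J(f)$ has non-empty interior $U$, non-normality of $\{f^k\}$ on $U\cap D_f$ forces the iterates not to jointly omit $q$ values, so $\bigcup_k f^k(U)$ covers $\bM$ outside a finite exceptional set; complete invariance of $J(f)$ then spreads $J(f)=\bM$.

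For (a), non-injectivity forces $\deg f^k\to\infty$, so some point has infinitely many preimages, which must accumulate in $\bM$ at a non-equicontinuity point lying in $J(f)$. Finiteness of $\cE(f)$ is another Rickman--Montel application: $\cE(f)$ is backward invariant, so $\#\cE(f)\geq q$ would render $\{f^k|_{\bM\setminus\cE(f)}\}$ normal and contradict $J(f)\neq\emptyset$. For the accumulation, fix $x\notin\cE(f)$ and $y\in J(f)$, and choose a neighborhood $U$ of $y$ meeting $\cE(f)$ at most at $y$; if $U\cap\bigcup_k f^{-k}(x)\subset\{y\}$, pick $N$ preimages $x_1,\dots,x_N\in\bM\setminus\cE(f)$ of $x$ with $N+\#\cE(f)\geq q$. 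Backward invariance of $\cE(f)$ then makes each $f^k|_{U\setminus\{y\}}$ omit $\cE(f)\cup\{x_1,\dots,x_N\}$, and Rickman--Montel forces normality across the isolated point $y$, contradicting $y\in J(f)$.

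For (b), when $S_f=\emptyset$, each $x\in\cE(f)$ is eventually periodic; a periodic $x$ of period $p$ has only itself as its local $f^p$-preimage, so $f^p$ is a local branched cover of degree $\geq 2$ at $x$, making $x$ super-attracting and hence $x\in F(f)$, while preperiodic $\cE(f)$-points lie in the corresponding basins. Combined with $J(f)\neq\emptyset$ from (a) this gives non-elementarity. For (c), perfectness of $J(f)$ is immediate from (a): by non-elementarity choose $y_0\in J(f)\setminus\cE(f)$; then by (a), the preimages of $y_0$ accumulate at every $y\in J(f)$ and lie in $J(f)$ by complete invariance.

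The main obstacle is the accumulation of $J(f)$ by periodic points in (c). My plan is a Fatou-style argument adapted to UQR dynamics: given $y\in J(f)$ and a small neighborhood $U$ assumed to contain no periodic point of $f$, select reference periodic points $p_1,\dots,p_m\in\bM\setminus\cE(f)$ of $f$ with $m+\#\cE(f)\geq q+1$, whose existence I would derive from Lefschetz-type fixed point counts for iterates on the closed manifold $\bM$ together with Theorem~\ref{th:classification}, which rules out all cycles being trapped in a single invariant Fatou region. I would then construct auxiliary $K$-quasiregular maps on $U$ whose jointly omitted values encode both the $p_i$'s and the diagonal condition $f^k(z)=z$; the no-periodic-point hypothesis will trigger Rickman--Montel and deliver normality of $\{f^k|_U\}$, contradicting $y\in J(f)$. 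The principal technical difficulty is realising ``$f^k(z)=z$'' as a genuine value omission for a QR family on a general Riemannian $\bM$ rather than on $\bS^n$; I expect to handle this either via a graph construction $z\mapsto(z,f^k(z))\in\bM\times\bM$ projected back to $\bM$, or by a QR analogue of the cross-ratio trick anchored on three reference cycles.
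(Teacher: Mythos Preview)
Your treatment of the preliminary claim and of parts (a), (b), and the perfectness half of (c) is broadly in line with the paper's, though you are tacitly assuming $S_f=\emptyset$ in several places. When $S_f\neq\emptyset$ the map has no global degree, and at a point $y\in J_2(f)=\bigcup_{k\ge 0}f^{-k}(\text{isolated points of }S_f)$ the iterates $f^k$ are not even defined, so your ``normality across the isolated point $y$'' step in (a) breaks down. The paper handles this by treating $J_1(f)=J(f)\cap D_f$ and $J_2(f)$ in parallel, using the Miniowitz--Zalcman rescaling (Theorem~\ref{th:Miniowitz}) at $J_1(f)$-points and the essential-singularity rescaling (Theorem~\ref{th:rescalesing}) at $J_2(f)$-points; density of $J_1(f)\cup J_2(f)$ in $J(f)$ then finishes. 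These are repairable gaps.

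The real problem is your plan for the accumulation of periodic points in (c). The Fatou-style strategy you sketch---encode the condition $f^k(z)=z$ as an omitted value for an auxiliary $K$-quasiregular family and invoke Rickman--Montel---does not survive the passage from holomorphic to quasiregular. Differences, quotients, and cross-ratios of $K$-quasiregular maps are not quasiregular in general, so there is no QR analogue of the classical normalised family $(f^k(z)-\alpha_k)/(f^k(z)-\beta_k)$; your graph map $z\mapsto(z,f^k(z))$ lands in a $2n$-manifold and is not a branched cover; and Lefschetz counts require $S_f=\emptyset$, so they supply no reference cycles in the transcendental case. None of the three routes you propose can be made to yield a $K$-QR omitted-value problem.

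The paper sidesteps this entirely by following Schwick's rescaling argument rather than Fatou's. At $a\in (J_1(f)\cup J_2(f))\setminus\cE(f)$ one has $f^{k_m}\circ\phi^{-1}(x_m+\rho_m v)\to g(v)$ locally uniformly with $g$ non-constant $K$-QR. Choosing $j_1$ so that some component of $f^{-j_1}(D_1)$ lies in a prescribed neighbourhood $U$ of $J(f)$, one sets
\[
\psi_m(v)=\phi\circ f^{j_1+k_m}\circ\phi^{-1}(x_m+\rho_m v)-(x_m+\rho_m v),\qquad
\psi(v)=\phi\circ f^{j_1}\circ g(v)-\phi(a).
\]
The crucial observation is that the Hurwitz-type Lemma~\ref{lemma:Hurwitz} requires only the \emph{limit} $\psi$ to be quasiregular; the approximants $\psi_m$ are merely continuous, and the subtraction of $(x_m+\rho_m v)$ is a vanishing perturbation. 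Since $0\in\psi(V_1)$, Hurwitz produces $v_m$ with $\psi_m(v_m)=0$, i.e.\ a fixed point of $f^{j_1+k_m}$, and $f^{k_m}(y_m)\in U$. This is the idea you are missing: rather than forcing $f^k(z)=z$ into a Montel framework, rescale first and then detect the fixed point by degree theory applied to a non-QR sequence with QR limit.
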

For non-constant and non-injective
uniformly quasiregular endomorphisms of the $n$-sphere $\bS^n$,
the accumulation of periodic points to $J(f)$ in Theorem \ref{th:periodic} is 
due to Siebert \cite[3.3.6 Theorem]{Siebert06}; 
note that by a theorem of Fletcher and Nicks \cite{FletcherA:Julsuq}, 
$J(f)$ is in fact uniformly perfect in this case.

The proof of the accumulation of periodic points to the Julia set
for non-elementary $f$ is based on two rescaling principles 
(see Section \ref{sec:background}).
It is a generalization of Schwick's argument \cite{SchwickW:Reppp}
(see also Bargmann \cite{BargmannD:Simpsf} and Berteloot--Duval \cite{BertelootF:Demddc}), 
which is a reminiscent to Julia's construction of
(expanding) homoclinic orbits for rational functions (\cite[\S 14]{Milnor3rd}).
Our argument simultaneously treats all the cases
$S_f=\emptyset$, 
$0<\#\bigcup_{k\ge 0}f^{-k}(S_f)<\infty$, and 
$\# \bigcup_{k\ge 0}f^{-k}(S_f)=\infty$, 
which are typically studied separately.

In the final assertion in Theorem \ref{th:periodic},
it would be natural and desirable 
to obtain the density of (repelling) periodic points in $J(f)$.

Our second main theorem gives sufficient conditions for those density results.
The topological dimension of a subset $E$ in $\bM$ is denoted by $\dim E$ and the branch set of $f$ by $B_f$; the {\itshape branch set} $B_f$ is the set of points at which $f$ is not a local homeomorphism.

\begin{mainth}\label{th:repdense}
Let $\bM$ be a closed, oriented, and connected Riemannian $n$-manifold, $n\ge 2$, and $f\colon \bM\setminus S_f \to\bM$ be a non-elementary local uniformly 
$K$-quasiregular mapping, $K\ge 1$, where $S_f$ is a countable and closed subset in $\bM$ and consists of isolated essential singularities of $f$ and their accumulation points in $\bM$. Then

\begin{itemize}
 \item[(a)]  If $F(f)$ is non-empty and connected, then
 points in $J(f)$ are accumulated by periodic points of $f$ contained in $J(f)$.

 \item[(b)] If one of the following four conditions
 \begin{enumerate}
 \item  $\#\bigcup_{k\ge 0}f^{-k}(S_f)<\infty$ and $\dim J(f)>n-2$,
        \label{item:connected}
  \item $f$ has a repelling periodic point
       in $D_f\setminus(\cE(f)\cup\bigcup_{k\in\bN}f^k(B_{f^k}))$, 
       \label{item:rep}
 \item $J(f)\not\subset\bigcap_{j\in\bN}\overline{\bigcup_{k\ge j}f^k(B_{f^k})}$, or\label{item:pcs}
 \item $n=2$ \label{item:surface}
 \end{enumerate}
 holds, then points in $J(f)$ are accumulated by repelling periodic points of $f$.
\end{itemize}
\end{mainth}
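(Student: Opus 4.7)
The plan is to treat (a) and (b) separately, with (b) organized as three reductions to case (ii) plus a direct proof of (ii).

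For (a), I would reduce the problem to showing that $F(f)$ contains only finitely many periodic points: since Theorem~\ref{th:periodic}(c) already yields periodic points accumulating on each $z\in J(f)$, such a finiteness result forces infinitely many of those accumulating periodic points to lie in $J(f)$ itself. To prove the finiteness, I would invoke the classification of cyclic Fatou components (Theorem~\ref{th:classification}) applied to the unique fixed connected Fatou component $F(f)$: each classified type (attracting/super-attracting basin, parabolic basin, or rotation-type component) supports only finitely many periodic orbits within a single cyclic component, so in particular $F(f)$ itself contains only finitely many periodic points of $f$.

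For (b), the main case is (ii), which I would prove by a pullback-and-linearization argument. Let $p$ be the given repelling periodic point of period $q$ lying in $D_f\setminus(\cE(f)\cup\bigcup_{k\in\bN}f^k(B_{f^k}))$. The Hinkkanen--Martin--Mayer linearization at repelling periodic points of UQR maps furnishes a neighborhood $B_p$ on which $f^q$ is quasiconformally conjugate to a linear expansion. Given $z\in J(f)$ and a neighborhood $U$ of $z$, Theorem~\ref{th:periodic}(a) produces points $x_n\in f^{-k_n}(p)$ with $x_n\to z$. The hypothesis $p\notin\bigcup_k f^k(B_{f^k})$ guarantees that the branch of $f^{-k_n}$ sending $p$ to $x_n$ is locally univalent on a neighborhood of $p$, so that $B_p$ pulls back to an open set $U_n\subset U$ with $f^{k_n}\colon U_n\to B_p$ a homeomorphism. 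Composing with a sufficiently high iterate $f^{\ell q}$ of the expansion and applying a rescaling argument \emph{\`a la} Schwick (and Berteloot--Duval) as in the proof of Theorem~\ref{th:periodic}(c), one extracts a fixed point $y_n\in U_n$ of $f^{k_n+\ell q}$; the linearization bound and the injectivity of the pullback ensure that this fixed point is repelling.

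The reductions to (ii) go as follows. For (iv), any quasiregular map in dimension two admits a Stoilow factorization $g\circ h$ with $h$ quasiconformal and $g$ holomorphic, reducing the density of repelling periodic points to the classical Fatou--Julia theorem on the underlying Riemann surface (the UQR structure plays the role of the holomorphic structure after factoring). For (iii), pick $z_0\in J(f)$ and a neighborhood $V$ of $z_0$ with $V\cap\bigcup_{k\ge j}f^k(B_{f^k})=\emptyset$ for some $j$; Theorem~\ref{th:periodic}(c) supplies periodic points of $f$ in $V$, and for any such point the local univalence of the relevant iterate of $f$ (inherited from $V$) forces it to lie in $D_f\setminus(\cE(f)\cup\bigcup_k f^k(B_{f^k}))$ and to be repelling, so (ii) applies. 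For (i), the finiteness of $\bigcup_{k\ge 0}f^{-k}(S_f)$ together with the dimension bound $\dim B_{f^k}\le n-2$ for quasiregular iterates implies that $\bigcup_{k\in\bN}f^k(B_{f^k})\cup\cE(f)$ has topological dimension at most $n-2$, so the assumption $\dim J(f)>n-2$ leaves a periodic point of $f$ in $J(f)$ outside this union (combining Theorem~\ref{th:periodic}(c) with a dimension-excision argument), and (ii) again applies.

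The hardest step will be the pullback-and-linearization in (ii): one must simultaneously control the univalence of many branches of $f^{-k_n}$ on a neighborhood of $p$ of uniform size (which is precisely what the hypothesis $p\notin\bigcup_k f^k(B_{f^k})$ gives) and ensure that the composition with $f^{\ell q}$ retains the expansion factor supplied by the local linearization despite the quasiregular distortion. A secondary difficulty is the reduction in (i), where the excision argument requires justifying that the relevant exceptional sets of iterates stay of dimension $\le n-2$; this rests on the chain-rule inclusion $B_{f^k}\subset\bigcup_{j=0}^{k-1}f^{-j}(B_f)$, the dimension estimate for $B_f$, and the finiteness of $\bigcup_k f^{-k}(S_f)$, which together keep the obstruction set thin enough for the dimension comparison to bite.
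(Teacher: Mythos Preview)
The central gap is in your organization of (b): you propose to reduce (i) and (iii) to (ii), but (ii) demands a \emph{repelling} periodic point as input, whereas Theorem~\ref{th:periodic}(c) only produces periodic points of unknown type. In your reduction for (iii), a periodic point found in $V$ need not be repelling merely because $V$ avoids $\bigcup_{k\ge j}f^k(B_{f^k})$; local univalence of an iterate $f^m$ at one of its fixed points does not exclude attracting or neutral behaviour, nor does it place the point outside $\bigcup_{k<j} f^k(B_{f^k})$. In (i), the dimension bound $\dim\bigl(\bigcup_k f^k(B_{f^k})\bigr)\le n-2$ shows $J(f)$ is not contained in the postcritical union, but since the periodic points form a countable (hence zero-dimensional) set, a dimension-excision argument cannot place one of them outside that union---and even if it could, repulsion would remain unproved. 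Both reductions are therefore circular.

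The paper avoids this by extracting a stand-alone criterion (Lemma~\ref{th:balk}) that manufactures repelling periodic points directly from a rescaling limit $g$ at a point $a\in (J_1(f)\cup J_2(f))\setminus\cE(f)$ satisfying $a\notin\bigcup_k f^k(B_{f^k})$ and $J(f)\cap g(X\setminus B_g)\ne\emptyset$: repulsion comes out of the proof of the lemma itself (univalence of $f^{j_1+j_2}\circ g$ on a small ball plus a Hurwitz argument), with no seed repeller needed. Each of (i), (ii), (iii) is then verified to supply such a pair $(a,g)$---for (ii) via the Koenigs-type rescaling at the given repelling point, so (ii) is one instance of the criterion rather than the base case. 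For (iv) the paper does not factor through the holomorphic theory but applies Nevanlinna's four totally ramified values theorem directly in the quasiregular setting to produce unramified preimages in the rescaled picture; your Stoilow sketch would instead require an invariant conformal structure (not a bare factorization $g\circ h$, which is not a conjugacy) and then an appeal to Bolsch--Herring for the conjugated holomorphic map with countable closed singular set. Finally, for (a) your assertion that a rotation-type component carries only finitely many periodic orbits is not justified; the paper instead rules out the rotation case by showing $f$ fails to be univalent on the connected $F(f)$, leaving only attractive or parabolic basins, which carry at most one periodic point.
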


Theorem \ref{th:repdense} combines and extends previous results of Hinkkanen--Martin--Mayer (\cite{HMM04}) and Siebert (\cite{SiebertThesis}) for UQR-mappings and classical results of Fatou and Julia (\cite[\S 14]{Milnor3rd}), Baker \cite{Baker68}, Bhattacharyya \cite{Bhattacharyya69}, and Bolsch \cite{Bolsch96} and Herring \cite{HerringThesis} in the holomorphic case.

For non-constant and non-injective uniformly quasiregular endomorphisms of $\bS^n$, the repelling density in $J(f)$ is due to Hinkkanen, Martin and Mayer \cite{HMM04} when $F(f)$ is either empty or not connected. Under these conditions $S_f=\emptyset$ and $\dim J(f)>n-2$. Siebert \cite[4.3.6 Satz]{SiebertThesis} proved the repelling density under the assumption $J(f)\not\subset\overline{\bigcup_{k\in\bN} f^k(B_{f^k}})$. In this case $J(f)\not\subset\bigcap_{j\in\bN}\overline{\bigcup_{k\ge j}f^k(B_{f^k})}$.

In the holomorphic dynamics, i.e.\;for $\bM=\bS^2$ (so $n=2$) and $K=1$,
every non-constant and non-injective holomorphic mapping 
$f\colon \bS^2\setminus S_f\to\bS^2$
is non-elementary (see Section \ref{sec:discussion}).
For $S_f=\emptyset$, the repelling density in $J(f)$ is a classical result of   
Fatou and Julia (cf.\ \cite[\S 14]{Milnor3rd}). For $\#\bigcup_{k\ge 0} f^{-k}(S_f)=1,2$ and $\#S_f =\infty$, it is due to Baker \cite{Baker68}, Bhattacharyya \cite{Bhattacharyya69}, Bolsch \cite{Bolsch96} and Herring \cite{HerringThesis}.
Note that our proof covers also the case $\#\bigcup_{k\ge 0} f^{-k}(S_f)>2$.

This paper is organized as follows. In Section \ref{sec:background}, 
we give a unified treatment for normal families and isolated essential singularities of quasiregular mappings. We also recall the invariance of
the dynamical sets $D_f,F(f),J(f)$, and $\cE(f)$ under $f$
and the Hinkkanen--Martin--Mayer classification for cyclic Fatou components
of non-elementary local uniformly quasiregular mappings. 
In Sections \ref{sec:theorem1} and \ref{sec:final},
we prove Theorems \ref{th:periodic} and \ref{th:repdense}. 
We finish, in Section \ref{sec:discussion}, with comments on the non-injectivity
and non-elementarity of non-constant local uniformly quasiregular dynamics.

\section{Preliminaries}
\label{sec:background}

We begin with notations and fundamental facts from the local degree theory.
For each oriented $n$-manifold $X$, we fix a generator $\omega_X$ of $H^n_c(X;\bZ)$
representing the orientation of $X$, and for each subdomain $D\subset X$, a generator $\omega_D$ of $H^n_c(D;\bZ)$ satisfying $\omega_X=\iota_{D,X}(\omega_D)$, 
where $\iota_{D,X}\colon H^n_c(D;\bZ) \to H^n_c(X;\bZ)$ is the canonical isomorphism.

Let $f \colon M \to N$ be a continuous mapping between oriented
$n$-manifolds $M$ and $N$.
For each domain $D\subset M$ and each $y\in N\setminus f(\partial D)$, 
the \emph{local degree of $f$ at $y\in N$ with respect to $D$} is the 
non-negative integer $\mu(y,f,D)$ satisfying
\begin{gather}
 \mu(y,f,D)\omega_D = \iota_{V, D}((f|V)^*\omega_{\Omega}),\label{eq:cohomology}
\end{gather}
where $\Omega$ is the component of $N\setminus f(\partial D)$ containing $y$ 
and $V=f^{-1}(\Omega)\cap D$. Indeed, we can take any open 
and connected neighborhood
of $y$ in $N\setminus f(\partial D)$ as $\Omega$. 
If $\mu(y,f,D)>0$, then $f^{-1}(y)\cap D\neq\emptyset$.
For more details, see e.g., \cite[Section I.2]{HeinonenJ:Geobcg}.

From now on, let $n\ge 2$ and $K\ge 1$. 
Let $M$ and $N$ be connected and oriented Riemannian $n$-manifolds,
and $f\colon M\to N$ a non-constant quasiregular mapping.
By Reshetnyak's theorem (see e.g.\;\cite[I.4.1]{Rickman93}), 
$f$ is a \emph{branched cover}, that is, an open and 
discrete mapping. 
Every $x\in M$ has a {\itshape normal neighborhood} with respect to $f$, that is, an open neighborhood $U$ of $x$ satisfying
$f(\partial U)= \partial(f(U))$ and $f^{-1}(f(x))\cap U = \{x\}$. 
We denote by $i(x,f)$ the \emph{topological index of $f$ at $x$}, that is, 
$i(x,f) = \mu(f(x),f,U)$.
The branch set $B_f$ of $f$ is the set of all $x\in M$
satisfying $i(x,f)\ge 2$, and is closed in $M$.
By the Chernavskii-V\"ais\"al\"a theorem \cite{VaisalaJ:Disomm}, 
the topological dimensions $\dim B_f$ and $\dim f(B_f)$ are at most $n-2$.

The local degree theory readily yields the following manifold version of
the Miniowitz--Rickman argument principle or the Hurwitz-type theorem; 
see \cite[Lemma 2]{Miniowitz82}; note that we do not assume that mappings $f_j$ to be quasiregular.

\begin{lemma}\label{lemma:Hurwitz}
Let $M$ and $N$ be oriented Riemannian $n$-manifolds, $n\ge 2$.  
Suppose a sequence $(f_j)$ of continuous mappings  
from $M$ to $N$ tends to a quasiregular mapping $f\colon M\to N$ locally uniformly 
on $M$ as $j\to\infty$.
Then for every domain $D\Subset M$ with $f(\partial D)=\partial(f(D))$  
and every compact subset $E\subset N\setminus f(\partial D)$, there  
exists $j_0\in\bN$ such that $\mu(y,f_j,D)=\mu(y,f,D)$ for every  
$j\ge j_0$ and every $y\in E$. 
\end{lemma}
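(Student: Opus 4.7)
The plan is to derive the lemma from the homotopy invariance of the topological degree, reading the cohomological equation \eqref{eq:cohomology} as the algebraic-topological definition of $\mu(y,f,D)=\degree(f,D,y)$. The strategy is to connect $f$ to $f_j$ by a Riemannian geodesic homotopy whose image on $\partial D$ stays uniformly away from $E$, and then to invoke standard invariance.

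First I would set up the metric data. Since $D\Subset M$, both $\overline{D}$ and $\partial D$ are compact, $f(\overline{D})$ is compact in $N$, and the local uniform convergence $f_j\to f$ is automatically uniform on $\overline{D}$. As $E$ is compact and disjoint from the closed set $f(\partial D)$, the distance $\delta:=\operatorname{dist}_N(E,f(\partial D))$ is positive, and the Riemannian injectivity radius of $N$ admits a positive lower bound $\rho$ on some neighborhood of $f(\overline{D})$. Choosing $j_0\in\bN$ so that $\sup_{x\in\overline{D}}d_N(f_j(x),f(x))<\min\{\delta/4,\rho\}$ for every $j\ge j_0$, I would define the continuous geodesic homotopy
\[
H\colon\overline{D}\times[0,1]\to N,\qquad H(x,t)=\exp_{f(x)}\bigl(t\exp_{f(x)}^{-1}(f_j(x))\bigr),
\]
with $H(\cdot,0)=f$ and $H(\cdot,1)=f_j$. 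The radial distance estimate gives $d_N(H(x,t),f(x))<\delta/4$ for every $(x,t)\in\overline{D}\times[0,1]$, so $H(\partial D\times[0,1])$ lies in the open $(\delta/4)$-neighborhood of $f(\partial D)$ and is in particular disjoint from $E$.

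I would then conclude as follows. Fix $y\in E$ and let $\Omega$ be the component of $N\setminus H(\partial D\times[0,1])$ containing $y$; this single open connected set is contained in the component of $N\setminus H_t(\partial D)$ through $y$ for every $t\in[0,1]$. Applying \eqref{eq:cohomology} to each $H_t\colon D\to N$ with this common neighborhood $\Omega$ realizes $\mu(y,H_t,D)$ as an integer depending continuously on $t$ through the pullback $H_t^*\omega_\Omega$ in the compactly supported integer cohomology of $H_t^{-1}(\Omega)\cap D$, and the connectedness of $[0,1]$ forces $t\mapsto\mu(y,H_t,D)$ to be constant. Evaluating at $t=0,1$ yields $\mu(y,f_j,D)=\mu(y,f,D)$, and since $j_0$ was chosen independently of $y\in E$, the conclusion holds uniformly on $E$.

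The main technical point I expect to negotiate is the rigorous continuity of the pullback on integer cohomology as $t$ varies; this is where the choice of the common open $\Omega$ is essential, since it places all pullbacks $H_t^*\omega_\Omega$ in a single cohomology group where continuous dependence can be combined with integrality in the last step. Everything else reduces to compactness and elementary Riemannian geometry needed to set up $H$.
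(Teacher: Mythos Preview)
Your proposal is correct and follows essentially the same route as the paper: both argue via homotopy invariance of the local degree, producing a homotopy from $f|\overline{D}$ to $f_j|\overline{D}$ whose trace on $\partial D$ avoids the target set and then reading off equality from \eqref{eq:cohomology}. The paper is terser---it simply asserts such a homotopy exists for $j$ large and applies invariance once with a single domain $\Omega\Subset f(D)$ containing $E$---whereas you spell out the geodesic construction and argue pointwise via continuity-plus-integrality; the only thing worth tightening is that your pullbacks $H_t^*\omega_\Omega$ should be compared after pushing forward via $\iota$ into the fixed group $H^n_c(D;\bZ)$, which is how the paper phrases it.
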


\begin{proof}
Let $\Omega\Subset f(D)$
be a domain containing $E$ and set $V:=f^{-1}(\Omega)\cap D$.
Then $(f|V)^*(\omega_{\Omega})\in H^n_c(V;\bZ)$.
Set $V_j:=f_j^{-1}(\Omega)\cap D$ for each $j\in\bN$. 
Since $f(\partial D)\cap \Omega = \emptyset$, 
by the uniform convergence of $(f_j)$ to $f$ on $\partial D$, 
there exists $j_0\in \bN$ for which $f_j(\partial D)\cap \Omega = \emptyset$ 
for every $j\ge j_0$. 
Thus $(f_j|V_j)^*(\omega_{\Omega})\in H^n_c(V_j;\bZ)$ for $j\ge j_0$ .
Furthermore, mappings $f|D$ and $f_j|D$ are properly homotopic with respect to $\Omega$ for every $j\in \bN$ large enough, that is, there exists $j_1\in \bN$ so that for every $j\ge j_1$ there exists a homotopy $F_j \colon  
\overline{D} \times [0,1]\to N$ from $f|\overline{D}$ to  
$f_j|\overline{D}$ and $F_j(\partial D \times [0,1])\cap \Omega = \emptyset$. 
Thus $\iota_{V,D}((f|V)^*\omega_{\Omega})=\iota_{V_j,D}((f_j|V_j)^*\omega_{\Omega})$ for $j\ge \max\{j_0,j_1\}$,
and \eqref{eq:cohomology} completes the proof.
\end{proof}

A point $x'\in M$ is a {\itshape non-normality point} of
a family $\mathcal{F}$ of $K$-quasiregular mappings from $M$ to $N$
if $\mathcal{F}$ is not normal on any open neighborhood of $x'$.
A point $x'\in M$ is an {\itshape isolated essential singularity}
of a quasiregular mapping $f:M\setminus\{x'\}\to N$ 
if $f$ does not extend to a continuous mapping from $M$ to $N$.

From now on, suppose that $N$ is closed.
The following theorems are manifold versions Miniowitz's Zalcman-type lemma (\cite[Lemma 1]{Miniowitz82}) and a Miniowitz--Zalcman-type rescaling principle for isolated essential singularities, respectively.

\begin{theorem}[{\cite[Theorem 19.9.3]{Iwaniec-Martin-book}}]
\label{th:Miniowitz}
Let $M$ be an oriented Riemannian $n$-manifold and 
$N$ a closed and oriented Riemannian $n$-manifold, $n\ge 2$, and let $x'\in M$. 
Then a family $\cF$ of $K$-quasiregular mappings, $K\ge 1$,
from $M$ to $N$ is not normal at $x'$ if and only if 
there exist sequences $(x_j)$, $(\rho_j)$, and $(f_j)$ in
$\bR^n$, $(0,\infty)$, and $\mathcal{F}$, respectively, and a non-constant 
$K$-quasiregular mapping $g \colon \R^n\to N$ such that 
$\lim_{j \to \infty} x_j = \phi(x')$, $\lim_{j\to\infty}\rho_j=0$ and 
\begin{gather}
\lim_{j\to\infty}f_j\circ \phi^{-1}(x_j+\rho_j v)=g(v)\label{eq:Miniowitz}
\end{gather}
locally uniformly on $\bR^n$,
where $\phi \colon D \to \R^n$ is a coordinate chart of $M$ at $x'$.
\end{theorem}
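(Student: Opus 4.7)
The plan is to execute the classical Zalcman–Miniowitz rescaling argument in a local coordinate chart. The reverse implication is immediate: if such $g$ existed while $\cF$ were normal at $x'$, passing to a subsequence gives $f_j\to f$ locally uniformly near $x'$, and then $f_j\circ\phi^{-1}(x_j+\rho_j v)\to f(x')$ uniformly on compact $v$-sets, forcing $g$ to be constant, a contradiction. For the forward implication, the two essential ingredients are (i) a Marty-type normality criterion for $K$-quasiregular mappings valued in the closed Riemannian manifold $N$, characterising normality at a point by local uniform boundedness of the spherical derivative $f^\#$, and (ii) a Reshetnyak-type convergence theorem asserting that the locally uniform limit of $K$-quasiregular mappings is either constant or $K$-quasiregular.

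First I would fix a coordinate chart $\phi\colon D\to\bR^n$ at $x'$ and a Euclidean ball $\overline{B(\phi(x'),r)}\Subset\phi(D)$. By non-normality and the Marty-type criterion, there exist $f_j\in\cF$ with $\sup_{\overline{B(\phi(x'),r)}}(f_j\circ\phi^{-1})^\#\to\infty$. The classical maximum trick, i.e.\ maximising $(r-|y-\phi(x')|)(f_j\circ\phi^{-1})^\#(y)$ over $B(\phi(x'),r)$ at a point $x_j$, produces a centre of rescaling with $x_j\to\phi(x')$ and a scale $\rho_j:=1/(f_j\circ\phi^{-1})^\#(x_j)\to 0$. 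Setting
\[
g_j(v):=f_j\circ\phi^{-1}(x_j+\rho_j v),
\]
defined for $|v|$ up to $(r-|x_j-\phi(x')|)/\rho_j\to\infty$, the maximality of $x_j$ yields
\[
g_j^\#(v)\le\frac{r-|x_j-\phi(x')|}{r-|x_j+\rho_j v-\phi(x')|}\le 1+o(1)
\]
uniformly on each ball $\{|v|\le R\}$, together with the normalisation $g_j^\#(0)=1$.

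Applying the Marty-type criterion in the reverse direction, $(g_j)$ is normal on $\bR^n$, so a diagonal subsequence converges locally uniformly to a continuous $g\colon\bR^n\to N$. The convergence theorem identifies $g$ as either constant or $K$-quasiregular. The normalisation $g_j^\#(0)=1$, combined with continuity of $f\mapsto f^\#(0)$ along normal sequences of $K$-quasiregular mappings (or, failing that, with Lemma~\ref{lemma:Hurwitz} applied to a small domain at $0$ to detect a non-trivial image), rules out $g$ being constant and thus produces the required non-constant quasiregular limit. The main technical obstacle is establishing the correct Marty and convergence statements for targets in a closed Riemannian manifold rather than in $\bS^n$; these rely on replacing the chordal metric by the Riemannian distance on $N$, using bounded geometry of the closed target together with the local degree bounds from Section~\ref{sec:background}, and are precisely what is carried out in \cite[Chapter 19]{Iwaniec-Martin-book}, from which the statement is quoted.
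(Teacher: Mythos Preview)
The paper does not supply a proof of this statement; it is quoted directly from \cite[Theorem~19.9.3]{Iwaniec-Martin-book} and used as a black box. Your sketch is the standard Zalcman--Miniowitz rescaling argument (Marty-type criterion, the weighted maximum trick to select centres $x_j$ and scales $\rho_j$, normality of the rescaled family, and the Reshetnyak-type limit theorem), and this is precisely the approach carried out in the cited reference, adapted there to closed Riemannian targets. So there is nothing to compare against within the paper itself, and your outline matches the proof in \cite{Iwaniec-Martin-book}.
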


\begin{theorem}[{\cite[Theorem 1]{OPrescaling}}]
\label{th:rescalesing}
Let $M$ be an oriented Riemannian $n$-manifold and 
$N$ a closed and oriented Riemannian $n$-manifold, $n\ge 2$, and let $x'\in M$. 
Then a $K$-quasiregular mapping $f:M\setminus\{x'\}\to N$, $K\ge 1$,
has an essential singularity at $x'$ if and only if 
there exist sequences $(x_j)$ and $(\rho_j)$ in $\bR^n$ and $(0,\infty)$, 
respectively, and a non-constant $K$-quasiregular mapping $g:X\to N$,
where $X$ is either $\R^n$ or $\R^n\setminus\{0\}$,
such that $\lim_{j\to\infty}x_j=\phi(x')$, $\lim_{j\to\infty}\rho_j=0$, and
\begin{gather}
 \lim_{j\to\infty}f\circ\phi^{-1}(x_j+\rho_j v)=g(v)\label{eq:YO}
\end{gather}
locally uniformly on $X$, where $\phi:D \to \R^n$ is a coordinate chart of $M$ at $x'$.
\end{theorem}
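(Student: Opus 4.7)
The backward implication is immediate: if $f$ extends continuously to $x'$ with value $y \in N$, then for any $x_j \to \phi(x')$ and $\rho_j \to 0$ the translates $v \mapsto f \circ \phi^{-1}(x_j + \rho_j v)$ converge to the constant $y$ locally uniformly, which precludes a non-constant $K$-quasiregular limit $g$.

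For the forward implication, work in a chart so that $\phi(x') = 0$ and $f$ is $K$-quasiregular on a punctured ball $B(0,r_0) \setminus \{0\}$ (identified via $\phi^{-1}$). Introduce the rescalings
\[
\mathcal{F} := \{F_t : t \in (0,t_0)\}, \qquad F_t(v) := f \circ \phi^{-1}(tv),
\]
each of which is $K$-quasiregular on $\{v \in \bR^n : 0 < |v| < r_0/t\}$. Two cases arise. If $\mathcal{F}$ fails to be normal at some $v_0 \in \bR^n \setminus \{0\}$, then Theorem \ref{th:Miniowitz}, applied to $\mathcal{F}$ on a small neighborhood of $v_0$ on which every $F_t$ is defined for $t$ sufficiently small, yields sequences $t_j \to 0$, $w_j \to v_0$, $r_j \to 0$ and a non-constant $K$-quasiregular $g \colon \bR^n \to N$ with $F_{t_j}(w_j + r_j w) \to g(w)$ locally uniformly on $\bR^n$; taking $x_j := t_j w_j$ and $\rho_j := t_j r_j$ gives \eqref{eq:YO} with $X = \bR^n$.

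If instead $\mathcal{F}$ is normal at every point of $\bR^n \setminus \{0\}$, a diagonal extraction along a compact exhaustion of $\bR^n \setminus \{0\}$ produces a subsequence $t_j \to 0$ and a limit $g$, either a constant or a $K$-quasiregular mapping, with $F_{t_j} \to g$ locally uniformly on $\bR^n \setminus \{0\}$. Taking $x_j := 0$ and $\rho_j := t_j$ then yields \eqref{eq:YO} with $X = \bR^n \setminus \{0\}$, \emph{provided} $g$ is non-constant.

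\textbf{Main obstacle.} The crux is to exclude, in the second case, the possibility that $g \equiv c$ is constant. If it were, then along the chosen subsequence $f$ would be uniformly close to $c$ on each annular region $\phi^{-1}(\{t_j/k \le |x| \le k t_j\})$ for every $k \in \bN$. A capacity or modulus estimate in the spirit of the quasiregular removable-singularity theorems (equivalently, an oscillation/maximum-principle argument across nested annuli on which the oscillation of $f$ tends to zero) would then force $f(x) \to c$ as $x \to x'$, contradicting the essential singularity hypothesis at $x'$. Establishing this "annular removability" is the main technical step; once it is in place, the correct choices of $x_j$, $\rho_j$, and the limit domain $X$ emerge automatically from whichever of the two cases occurs.
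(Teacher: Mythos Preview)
The paper does not prove Theorem~\ref{th:rescalesing}; it is quoted from the authors' companion preprint \cite{OPrescaling} and used as a black box. There is thus no in-paper argument to compare against.

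Your two-case strategy is the natural one and almost certainly matches \cite{OPrescaling}: apply Theorem~\ref{th:Miniowitz} at a non-normality point of $\{F_t\}$ to obtain $X=\bR^n$, or else extract a locally uniform limit on $X=\bR^n\setminus\{0\}$. You have also correctly isolated the only real difficulty, excluding a constant limit in the normal case. Your proposed resolution, however, is only a gesture: knowing $F_{t_j}\to c$ along one subsequence controls $f$ only on the annuli $\{t_j/k\le |x|\le kt_j\}$, and chaining overlapping annuli by the triangle inequality accumulates error without bound, so this does not by itself give $f(x)\to c$. A complete argument first observes that if \emph{every} subsequential limit were constant then normality forces $\diam F_t(A)\to 0$ as $t\to 0$ for every compact annulus $A\subset\bR^n\setminus\{0\}$ (not merely along a subsequence), and then invokes a genuine quasiregular removability criterion---roughly, that an isolated singularity across which small spheres have shrinking image is removable---whose proof uses the openness and discreteness of $f$ together with a modulus or capacity estimate. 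That last step is where the quasiregular machinery enters in earnest; it is standard but not a one-liner, and your outline would need it spelled out to be complete. One minor point in your Case~1: you should also argue that the parameters $t_j$ produced by Theorem~\ref{th:Miniowitz} can be taken with $t_j\to 0$; this holds because $\{F_t:t\ge\delta\}$ is equicontinuous near any $v_0\ne 0$ for every $\delta>0$, so the non-normality must be witnessed by $t_j\to 0$.
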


By the Holopainen--Rickman Picard-type theorem \cite{HR98}, 
{\itshape for every $n\ge 2$ and every $K\ge 1$,
there exists a non-negative integer $q$ such that
$\#(N\setminus f(\bR^n))\le q$
for every closed and oriented Riemannian $n$-manifold $N$ and
every non-constant $K$-quasiregular mapping $f\colon \bR^n \to N$.}
We use this Picard-type theorem in this article also in the following form.
%This is slightly generalized as follows.

\begin{theorem}\label{th:puncture}
For every $n\ge 2$ and every $K\ge 1$,
there exists a non-negative integer $q'$ such that
$\#(N\setminus g(X))\le q'$
for every closed and oriented Riemannian $n$-manifold $N$ and
every non-constant $K$-quasiregular mapping $f\colon X\to N$,
where $X$ is either $\bR^n$ or $\bR^n\setminus\{0\}$.
\end{theorem}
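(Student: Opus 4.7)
The plan is to reduce the punctured case to the non-punctured case by pre-composition. The case $X = \bR^n$ is already the Holopainen-Rickman theorem quoted just above, with bound $q = q(n,K)$, so the only work lies with $X = \bR^n \setminus \{0\}$.

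The key tool I would invoke is a Zorich-type map: a constant $K_0 = K_0(n) \ge 1$ together with a surjective $K_0$-quasiregular mapping $\Phi \colon \bR^n \to \bR^n \setminus \{0\}$. Classically such a $\Phi$ is constructed by partitioning $\bR^n$ into slabs, sewing together bi-Lipschitz maps onto hemispheres of $\bS^{n-1}$, and extending exponentially in the remaining direction; this is the higher-dimensional analogue of $z \mapsto e^z$ and is a standard ingredient in quasiregular mapping theory (see Iwaniec-Martin's monograph).

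Given a non-constant $K$-quasiregular mapping $f \colon \bR^n \setminus \{0\} \to N$, I would form the composition $F := f \circ \Phi \colon \bR^n \to N$. By the chain rule for the distortion inequality, $F$ is $K K_0$-quasiregular, and it is non-constant: since $\Phi$ is surjective onto the domain of $f$, constancy of $F$ would force constancy of $f$. Applying the Holopainen-Rickman theorem to $F$ yields $\#(N \setminus F(\bR^n)) \le q(n, K K_0)$. Since $\Phi$ is surjective, $F(\bR^n) = f(\Phi(\bR^n)) = f(\bR^n \setminus \{0\})$, and therefore $\#(N \setminus f(\bR^n \setminus \{0\})) \le q(n, K K_0)$. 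Setting $q' := q(n, K K_0)$ thus works uniformly for both choices of $X$ and depends only on $n$ and $K$. There is no genuine obstacle here; the argument is a one-line reduction once $\Phi$ is in hand, and the only technical point requiring care is that the distortion constant $K_0$ of $\Phi$ depends only on $n$, which is classical.
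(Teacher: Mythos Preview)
Your proposal is correct and matches the paper's proof essentially verbatim: the paper also pre-composes with the Zorich map $Z_n\colon \bR^n\to\bR^n\setminus\{0\}$ (citing Rickman, I.3.3), sets $K' = K\cdot K_n$, and invokes the Holopainen--Rickman Picard-type theorem for the resulting $K'$-quasiregular map on $\bR^n$.
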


\begin{proof}
 Let $Z_n\colon \bR^n\to\bR^n\setminus\{0\}$ be the Zorich mapping and $K_n\ge 1$ the distortion constant of $Z_n$; see e.g.\;\cite[I.3.3]{Rickman93} for the construction of the Zorich map. Set $K':=K\cdot K_n\ge 1$. Replacing $f$ with $f\circ Z_n$ if necessary, we may assume that $f$ is a $K'$-quasiregular mapping
 from $\bR^n$ to $N$. Now the Holopainen--Rickman Picard-type theorem \cite{HR98} completes the proof.
\end{proof}

Let $q'(n,K)$ be the smallest such $q'\in\bN\cup\{0\}$
as in Theorem \ref{th:puncture}, which we call the 
\emph{quasiregular Picard constant for parameters $n\ge 2$ and $K\ge 1$}. 

Having a Hurwitz-type theorem (Lemma \ref{lemma:Hurwitz}) and
rescaling theorems for a non-normality point of a family of $K$-quasiregular mappings 
and for an essential isolated singularity
of a quasiregular mapping (Theorems \ref{th:Miniowitz} and \ref{th:rescalesing})
at our disposal,
a ``from little to big by rescaling'' argument deduces 
the following Montel-type and big Picard-type theorems;
see \cite{Miniowitz82} and \cite[Theorem 2]{OPrescaling}.

\begin{theorem}\label{th:Montel}
Let $M$ be an oriented Riemannian $n$-manifold and 
$N$ a closed and oriented Riemannian $n$-manifold, $n\ge 2$. 
Then a non-normality point $x'\in M$ of a family $\cF$ of $K$-quasiregular mappings,
$K\ge 1$, from $M$ to $N$ is contained in $\overline{\bigcup_{f\in\mathcal{F}}f^{-1}(y)}$ 
for every $y\in N$ except for at most $q'(n,K)$ points.
\end{theorem}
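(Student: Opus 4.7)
The plan is to combine the Miniowitz-type rescaling principle (Theorem \ref{th:Miniowitz}) with the quasiregular Picard-type bound (Theorem \ref{th:puncture}) via the Hurwitz-type argument principle (Lemma \ref{lemma:Hurwitz}). The exceptional set $E\subset N$ will be precisely the set of values omitted by the rescaling limit, whose size is controlled by $q'(n,K)$.

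First I would apply Theorem \ref{th:Miniowitz} at the non-normality point $x'$ to extract a coordinate chart $\phi$ of $M$ at $x'$, sequences $(x_j)\subset\bR^n$ with $x_j\to\phi(x')$, $(\rho_j)\subset(0,\infty)$ with $\rho_j\to 0$, and $(f_j)\subset\mathcal{F}$, together with a non-constant $K$-quasiregular mapping $g\colon\bR^n\to N$ such that the rescalings $g_j(v):=f_j\circ\phi^{-1}(x_j+\rho_j v)$ converge to $g$ locally uniformly on $\bR^n$. Setting $E:=N\setminus g(\bR^n)$, Theorem \ref{th:puncture} applied to the non-constant $K$-quasiregular map $g$ with $X=\bR^n$ immediately gives $\#E\le q'(n,K)$.

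Next, for any $y\in N\setminus E$, I would pick $v_0\in g^{-1}(y)$ and a normal neighborhood $B\Subset\bR^n$ of $v_0$ with respect to the branched cover $g$, so that $g(\partial B)=\partial g(B)$, $y\notin g(\partial B)$, and $\mu(y,g,B)=i(v_0,g)\ge 1$. Since $g_j\to g$ uniformly on $\overline{B}$ (and the $g_j$ are defined there for $j$ large, as $x_j+\rho_j\overline{B}$ eventually lies in the domain of $\phi^{-1}$), Lemma \ref{lemma:Hurwitz} yields $\mu(y,g_j,B)=\mu(y,g,B)\ge 1$ for all sufficiently large $j$, so there exist $v_j\in B$ with $g_j(v_j)=y$. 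The points $w_j:=\phi^{-1}(x_j+\rho_j v_j)\in M$ then satisfy $f_j(w_j)=y$ and, because $(v_j)$ is bounded in $B$ and $\rho_j\to 0$, $w_j\to x'$. Hence $x'\in\overline{\bigcup_j f_j^{-1}(y)}\subset\overline{\bigcup_{f\in\mathcal{F}}f^{-1}(y)}$.

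I do not anticipate a serious obstacle, since the heavy lifting is already packaged in the cited rescaling and Picard-type theorems; the one point warranting a little care is arranging the hypotheses of the Hurwitz-type lemma, which is handled by insisting that $B$ be a \emph{normal} neighborhood of $v_0$ so that $g(\partial B)=\partial g(B)$ and $\{y\}$ is a compact subset of $N\setminus g(\partial B)$.
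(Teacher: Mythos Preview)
Your proposal is correct and follows essentially the same approach as the paper: rescale via Theorem~\ref{th:Miniowitz}, bound the omitted set of the limit $g$ by Theorem~\ref{th:puncture}, and then use the Hurwitz-type Lemma~\ref{lemma:Hurwitz} to pull preimages of $y$ back toward $x'$. The only cosmetic difference is that the paper phrases the domain for the Hurwitz step as a relatively compact component $V$ of $g^{-1}(U)$ for a small $U\ni y$ (so that $g\colon V\to U$ is proper), whereas you take a normal neighborhood $B$ of a chosen $v_0\in g^{-1}(y)$; both choices arrange $y\notin g(\partial\,\cdot\,)$ and the argument is otherwise identical.
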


\begin{theorem}\label{th:bP}
Let $M$ be an oriented Riemannian $n$-manifold and 
$N$ a closed and oriented Riemannian $n$-manifold, $n\ge 2$.
Then an essential singularity $x'\in M$ 
of a $K$-quasiregular mapping $f:M\setminus\{x'\}\to N$, $K\ge 1$,
is accumulated by $f^{-1}(y)$ for every $y \in N$ except for at most $q'(n,K)$ points.
\end{theorem}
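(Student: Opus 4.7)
The plan is to deduce the theorem from the rescaling principle at an isolated essential singularity (Theorem \ref{th:rescalesing}) together with the quasiregular Picard theorem on the model spaces $\bR^n$ and $\bR^n\setminus\{0\}$ (Theorem \ref{th:puncture}), passing between the local and global scales by means of Lemma \ref{lemma:Hurwitz}. This is exactly the ``from little to big by rescaling'' pattern flagged before the statement.

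I would argue by contradiction. Let
\[
E:=\{y\in N:y\text{ is not accumulated by }f^{-1}(y)\text{ at }x'\}
\]
and assume $\#E>q'(n,K)$. Fix a coordinate chart $\phi\colon D\to\bR^n$ at $x'$, and for each $y\in E$ choose a neighborhood $U_y\Subset D$ of $x'$ with $y\notin f(U_y\setminus\{x'\})$. Theorem \ref{th:rescalesing} provides sequences $x_j\to\phi(x')$ in $\bR^n$ and $\rho_j\to 0$ in $(0,\infty)$, together with a non-constant $K$-quasiregular mapping $g\colon X\to N$, $X\in\{\bR^n,\bR^n\setminus\{0\}\}$, such that
\[
f_j(v):=f\circ\phi^{-1}(x_j+\rho_j v)\longrightarrow g(v)
\]
locally uniformly on $X$.

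The crucial step is to show that $E\subset N\setminus g(X)$. Given $y\in E$, suppose towards a contradiction that $g(v_0)=y$ for some $v_0\in X$. Since $g$ is a non-constant $K$-quasiregular mapping, hence a branched cover, $v_0$ admits a normal neighborhood $B\Subset X$ for $g$ with $g^{-1}(y)\cap\overline{B}=\{v_0\}$; in particular $y\notin g(\partial B)$ and $\mu(y,g,B)=i(v_0,g)\ge 1$. Because the convergence is locally uniform on $X$, each $f_j$ is defined on $\overline{B}$ for $j$ large, and the relations $x_j\to\phi(x')$, $\rho_j\to 0$ with $\overline{B}$ compact force $\phi^{-1}(x_j+\rho_j\overline{B})\subset U_y\setminus\{x'\}$ eventually. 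Hence $y\notin f_j(\overline{B})$ for $j$ large, contradicting Lemma \ref{lemma:Hurwitz}, which yields $\mu(y,f_j,B)=\mu(y,g,B)\ge 1$ and therefore $y\in f_j(B)$ for $j$ large. Consequently $g$ omits every point of $E$, and Theorem \ref{th:puncture} forces $\#E\le q'(n,K)$, the desired contradiction.

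The main obstacle I expect is precisely this Hurwitz step: coupling the local omission of $y$ near $x'$ to the global behaviour of $g$. It requires both that $B$ can be taken to be a normal neighborhood of $v_0$ with respect to $g$ (which uses the branched-cover nature of non-constant quasiregular mappings) and that the rescaled points $x_j+\rho_j v$ remain in $\phi(U_y)\setminus\{\phi(x')\}$ uniformly in $v\in\overline{B}$, a point that is built into the locally uniform convergence in Theorem \ref{th:rescalesing} but must be tracked carefully when $X=\bR^n$ and $v_0=0$.
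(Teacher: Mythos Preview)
Your proposal is correct and follows essentially the same approach as the paper: apply the rescaling principle (Theorem~\ref{th:rescalesing}) to obtain a non-constant $K$-quasiregular limit $g\colon X\to N$, use the Hurwitz-type Lemma~\ref{lemma:Hurwitz} to transfer hits of $y$ by $g$ back to $f$ near $x'$, and invoke Theorem~\ref{th:puncture} to bound the omitted set. The only cosmetic difference is that the paper argues directly (for each $y\in g(X)$ it exhibits preimages $\phi^{-1}(x_j+\rho_jv_j)\to x'$), whereas you phrase the same implication contrapositively and wrap it in an unnecessary contradiction; your normal-neighborhood $B$ plays the same role as the paper's relatively compact component $V$ of $g^{-1}(U)$.
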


The similarity Theorems \ref{th:Montel} and \ref{th:bP} goes beyond the statements and we prove these results simultaneously. The argument can also be viewed as a prototype of the proofs of Theorems \ref{th:periodic} and \ref{th:repdense}.

\begin{proof}[Proof of Theorems $\ref{th:Montel}$ and $\ref{th:bP}$]
Let $x'\in M$ be either a non-normality point in Theorem \ref{th:Montel} 
or an isolated essential singularity in Theorem \ref{th:bP}.

Let $X$ is either $\bR^n$ or $\bR^n\setminus\{0\}$ and let $g\colon X\to N$ be the non-constant quasiregular mapping $v \mapsto f_j\circ\phi^{-1}(x_j+\rho_jv)$
as in Lemma \ref{th:Miniowitz} or in Lemma \ref{th:rescalesing}, respectively, 
associated to this $x'$. Here $f_j\equiv f$ if $x'$ is as in Lemma \ref{th:bP}. 

Then $g(X)$ is an open subset in $N$, and satisfies $\#(N\setminus g(X))\le q'(n,K)$ by Theorem \ref{th:puncture}. 

Let $y\in g(X)$. Fix a subdomain $U$ in $N$ containing $y$ for which
some component $V$ of $g^{-1}(U)$ is relatively compact in $X$.
Then $g\colon V\to U$ is proper. By the locally uniform convergence 
and Lemma \ref{lemma:Hurwitz},
for every $j\in\bN$ large enough, there exists $v_j\in V$ such that 
$\phi^{-1}(x_j+\rho_jv_j)\in f_j^{-1}(y)$. 
By the uniform convergence, $\lim_{j\to\infty}\phi^{-1}(x_j+\rho_jv)=x'$ uniformly 
on $v\in\overline{V}$.  Thus $\lim_{j\to\infty}\phi^{-1}(x_j+\rho_jv_j)=x'$ and
$x'\in\overline{\bigcup_{j\in\bN}f_j^{-1}(y)}$.

Moreover, if $x'$ is an essential singularity of $f$, 
then $\phi^{-1}(x_j+\rho_jv_j)\neq x'$ for every $j\in\bN$.
Thus $x'$ is accumulated by $\bigcup_{j\in\bN}f_j^{-1}(y)=f^{-1}(y)$.
\end{proof}

The following Nevanlinna's four totally ramified value theorem is
specific to the case $n=2$. Theorem \ref{th:Nevanlinna}
reduces to the original case that $X=\bR^2$ and $N=\bS^2$
by lifting it to the (conformal) universal coverings of $X$ and $N$, which are
isomorphic to $\bR^2$ and a subdomain in $\bS^2$, respectively.

\begin{theorem}[cf.\ {\cite[p.\ 279, Theorem]{Nevan70}}]\label{th:Nevanlinna}
 Let $g:X\to N$ be a non-constant quasiregular mapping from  
 $X$ to 
 a closed, oriented, and connected Riemannian $2$-manifold $N$,
 where $X$ is either $\R^2$ or $\R^2\setminus\{0\}$.
 Then for every $E\subset N$ containing more than $4$ points,
 $E\cap g(X\setminus B_g)\neq\emptyset$.
\end{theorem}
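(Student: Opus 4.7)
The plan is to follow the reduction sketched by the authors: lift $g$ to the universal coverings of $X$ and $N$ and invoke the classical Nevanlinna four totally ramified value theorem for meromorphic functions on $\bC$.

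First I would set up the coverings. Since $X$ is either $\R^2$ or $\R^2\setminus\{0\}$, its conformal universal cover is $\R^2$ in both cases (via the identity or the complex exponential $z\mapsto e^z$, respectively); call this covering $\pi\colon\R^2\to X$. On the target side, the Riemannian metric on $N$ induces a unique compatible complex structure through isothermal coordinates, making $N$ into a closed Riemann surface. By the uniformization theorem, its universal cover $q\colon\tilde N\to N$ is conformally one of $\bS^2$, $\bC$, or the unit disk, each of which embeds as a subdomain of $\bS^2$.

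Next, since $\R^2$ is simply connected and $q$ is a covering, the composition $g\circ\pi\colon\R^2\to N$ lifts to a continuous map $\tilde g\colon\R^2\to\tilde N\subset\bS^2$ satisfying $q\circ\tilde g=g\circ\pi$. Because $\pi$ and $q$ are local conformal diffeomorphisms, the local index of $\tilde g$ at $\tilde x$ equals that of $g$ at $\pi(\tilde x)$, so $\tilde g$ is non-constant and $K$-quasiregular with $B_{\tilde g}=\pi^{-1}(B_g)$, and
\[
g(X\setminus B_g)=q\bigl(\tilde g(\R^2\setminus B_{\tilde g})\bigr).
\]
Now I would argue by contradiction. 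Suppose $E\subset N$ has more than four points and $E\cap g(X\setminus B_g)=\emptyset$. Setting $\tilde E:=q^{-1}(E)\subset\tilde N$, surjectivity of $q$ gives $\#\tilde E\ge\#E>4$, and the displayed identity forces $\tilde E\cap\tilde g(\R^2\setminus B_{\tilde g})=\emptyset$. Hence every point of $\tilde E$ is either omitted by $\tilde g$ or attained only at branch points, i.e., is a totally ramified value of $\tilde g$. Via Stoilow factorization $\tilde g=h\circ\psi$, with $\psi\colon\R^2\to\R^2$ a quasiconformal homeomorphism and $h\colon\R^2\to\bS^2$ meromorphic, the totally ramified values of $\tilde g$ and of $h$ coincide (since $\psi$ has empty branch set and is a homeomorphism). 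Classical Nevanlinna's theorem then furnishes the required contradiction, namely, that $h$ admits at most four totally ramified values.

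The main obstacle is merely bookkeeping: making sure that the notion of \emph{totally ramified value} is transported correctly along each layer, $E\leadsto\tilde E\leadsto$ ramified values of $h$. Once one verifies that $\pi$ and $q$ are unbranched coverings (so preserve the local index structure) and that $\psi$ has trivial branch set, the transfer is automatic. Everything else reduces to the classical 2-dimensional theorem.
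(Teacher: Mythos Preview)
Your proof is correct and follows exactly the reduction sketched in the paper: lift to the conformal universal covers of $X$ and $N$ (which are $\bR^2$ and a subdomain of $\bS^2$, respectively) and invoke the classical Nevanlinna result. You have simply filled in the bookkeeping the paper leaves implicit, including the Stoilow factorization needed to pass from a quasiregular $\tilde g$ to a meromorphic $h$.
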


Again, having a Hurwitz-type theorem (Lemma \ref{lemma:Hurwitz}) and
rescaling theorems for both a non-normality point of a 
family of $K$-quasiregular mappings
and an isolated singularity of a quasiregular mapping 
(Theorems \ref{th:Miniowitz} and \ref{th:rescalesing})
at our disposal, a ``from little to big by rescaling'' argument 
deduces the following two big versions of Theorem \ref{th:Nevanlinna}.

\begin{lemma}\label{th:bigNevanfamily}
Let $M$ be an oriented Riemannian $2$-manifold and 
$N$ a closed and oriented Riemannian $2$-manifold, $n\ge 2$.
Then a non-normality point $x'\in M$ of
a family $\cF$ of $K$-quasiregular mappings, $K\ge 1$, from $M$ to $N$
is contained in $\overline{\bigcup_{f\in\mathcal{F}}(f^{-1}(E)\setminus B_f)}$ 
for every $E\subset N$ containing more than $4$ points.
\end{lemma}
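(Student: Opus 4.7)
The plan is to reproduce Nevanlinna's four-value theorem at a Miniowitz rescaling limit of $\cF$ at $x'$ and then transfer a witness preimage back to a member of the family while maintaining local degree $1$, so that the transferred preimage avoids the branch set.

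First, I would invoke Theorem~\ref{th:Miniowitz} to produce sequences $(x_j)\subset\bR^2$, $(\rho_j)\subset(0,\infty)$ and $(f_j)\subset\cF$, a chart $\phi\colon D\to\bR^2$ at $x'$, and a non-constant $K$-quasiregular mapping $g\colon\bR^2\to N$ such that $x_j\to\phi(x')$, $\rho_j\to 0$, and
\[
g_j(v):=f_j\circ\phi^{-1}(x_j+\rho_j v)\to g(v)
\]
locally uniformly on $\bR^2$. Since $\#E>4$, Theorem~\ref{th:Nevanlinna} applied to $g$ furnishes a point $y\in E\cap g(\bR^2\setminus B_g)$. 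Choosing $v_0\in g^{-1}(y)\setminus B_g$, the fact that $g$ is a local homeomorphism at $v_0$ allows me to pick a relatively compact domain $V\ni v_0$ in $\bR^2$ so small that $g|\overline V$ is injective; then $g|V\colon V\to g(V)$ is a homeomorphism, $g(\partial V)=\partial g(V)$, and $\mu(y,g,V)=1$.

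Next, Lemma~\ref{lemma:Hurwitz} yields $\mu(y,g_j,V)=1$ for every sufficiently large $j$, so there is a unique $v_j\in V$ with $g_j(v_j)=y$ and $i(v_j,g_j)=1$. Because $v\mapsto\phi^{-1}(x_j+\rho_j v)$ is a homeomorphism onto its image in $M$, the topological index is preserved, and the point $w_j:=\phi^{-1}(x_j+\rho_j v_j)$ satisfies $w_j\in f_j^{-1}(y)\setminus B_{f_j}$. Boundedness of $V$ together with $\rho_j\to 0$ and $x_j\to\phi(x')$ forces $w_j\to x'$ in $M$, which gives
\[
x'\in\overline{\bigcup_{j\in\bN}(f_j^{-1}(y)\setminus B_{f_j})}\subset\overline{\bigcup_{f\in\cF}(f^{-1}(E)\setminus B_f)}.
\]

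The main subtlety, in comparison with Theorems~\ref{th:Montel} and \ref{th:bP}, is to guarantee that avoiding $B_g$ at the rescaled level translates into avoiding $B_{f_j}$ at the original level. This is exactly what the Hurwitz-type equality of local degrees $\mu(y,g_j,V)=\mu(y,g,V)=1$ accomplishes: a local degree equal to one forces a single preimage with local index one, and the invariance of the topological index under the coordinate chart $\phi^{-1}$ and the affine rescaling $v\mapsto x_j+\rho_j v$ propagates this to $i(w_j,f_j)=1$.
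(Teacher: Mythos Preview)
Your proof is correct and follows essentially the same route as the paper's own argument: rescale via Theorem~\ref{th:Miniowitz}, apply Theorem~\ref{th:Nevanlinna} to the limit $g$ to find an unbranched preimage of some $y\in E$, choose a small normal neighborhood $V$ on which $g$ is univalent, and then use the Hurwitz-type Lemma~\ref{lemma:Hurwitz} to transfer the degree-one condition back to $g_j$, yielding $w_j\in f_j^{-1}(E)\setminus B_{f_j}$ converging to $x'$. Your explicit justification that $\mu(y,g_j,V)=1$ forces $i(v_j,g_j)=1$, and hence $i(w_j,f_j)=1$ after undoing the chart and rescaling, spells out a step the paper leaves implicit.
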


\begin{lemma}\label{th:bigNevan}
Let $M$ be an oriented Riemannian $2$-manifold and 
$N$ a closed and oriented Riemannian $2$-manifold, $n\ge 2$.
Then an essential singularity $x'\in M$
of a quasiregular mapping $f\colon M\setminus\{x'\}\to N$
is accumulated by $f^{-1}(E)\setminus B_f$ 
for every $E\subset N$ containing more than $4$ points.
\end{lemma}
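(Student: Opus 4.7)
The plan is to mimic the argument given just above for Theorems \ref{th:Montel} and \ref{th:bP}, but to replace Theorem \ref{th:puncture} with Nevanlinna's four totally ramified value theorem (Theorem \ref{th:Nevanlinna}) at the stage where one picks a target value.

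First, I would invoke the rescaling principle for isolated essential singularities (Theorem \ref{th:rescalesing}) to obtain sequences $(x_j)\subset\bR^2$ and $(\rho_j)\subset(0,\infty)$ with $x_j\to\phi(x')$ and $\rho_j\to 0$, together with a non-constant $K$-quasiregular mapping $g\colon X\to N$, where $X$ is either $\bR^2$ or $\bR^2\setminus\{0\}$, such that
\[
f_j(v):=f\circ\phi^{-1}(x_j+\rho_j v)\to g(v)
\]
locally uniformly on $X$, for a chart $\phi$ of $M$ at $x'$. Writing $\psi_j(v):=\phi^{-1}(x_j+\rho_j v)$, each $\psi_j$ is a diffeomorphism from its domain onto its image, so the multiplicativity of the local index gives $i(v,f_j)=i(\psi_j(v),f)$ and hence $B_{f_j}=\psi_j^{-1}(B_f)$.

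Next, given any $E\subset N$ with $\#E>4$, I would apply Theorem \ref{th:Nevanlinna} to $g$ to extract some $y\in E$ and some $v_0\in X\setminus B_g$ with $g(v_0)=y$. I would then choose a normal neighborhood $V\Subset X$ of $v_0$ with respect to $g$ small enough that $g|_V$ is a homeomorphism onto an open neighborhood $U$ of $y$; in particular $g(\partial V)=\partial g(V)$, the point $y$ lies in a compact subset of $N\setminus g(\partial V)$, and $\mu(y,g,V)=1$. Lemma \ref{lemma:Hurwitz} applied to $f_j\to g$ on $V$ will then yield, for all sufficiently large $j$, that $\mu(y,f_j,V)=1$, so there exists $v_j\in V$ with $f_j(v_j)=y$ and $i(v_j,f_j)=1$, i.e., $v_j\notin B_{f_j}$. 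Setting $w_j:=\psi_j(v_j)$, I obtain $f(w_j)=y\in E$ and $w_j\notin B_f$ by the branch-set identification above, while $w_j\to x'$ by the uniform convergence $\psi_j\to x'$ on $\overline V$, and $w_j\neq x'$ because $w_j$ lies in the domain of $f$. This shows that $x'$ is accumulated by $f^{-1}(\{y\})\setminus B_f\subset f^{-1}(E)\setminus B_f$.

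The only delicate bookkeeping will be tracking the branch set under the pre-composition with $\psi_j$; this is handled entirely by the multiplicativity of the local index and the fact that $\psi_j$ is a diffeomorphism. Everything else runs in parallel with the proofs of Theorems \ref{th:Montel} and \ref{th:bP}, with the quasiregular Picard constant $q'(n,K)$ replaced by the exceptional cardinality $4$ supplied by Theorem \ref{th:Nevanlinna}.
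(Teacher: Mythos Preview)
Your argument is correct and follows essentially the same route as the paper's proof: rescale via Theorem \ref{th:rescalesing}, apply Theorem \ref{th:Nevanlinna} to obtain a regular $g$-preimage of some $y\in E$, choose a small normal neighborhood $V$ on which $g$ is univalent, and use Lemma \ref{lemma:Hurwitz} to produce preimages $v_j\in V$ with local degree $1$, whence the corresponding points $\phi^{-1}(x_j+\rho_j v_j)$ lie in $f^{-1}(E)\setminus B_f$ and accumulate to $x'$. Your explicit bookkeeping of $B_{f_j}=\psi_j^{-1}(B_f)$ via multiplicativity of the local index is a slightly more detailed rendition of what the paper leaves implicit, but the overall strategy is identical.
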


Again, due the similarity of the statements we give a simultaneous proof.

\begin{proof}[Proof of Lemmas $\ref{th:bigNevanfamily}$ and
$\ref{th:bigNevan}$]
Let $x'\in M$ be as in either Lemma $\ref{th:bigNevanfamily}$ or
Lemma $\ref{th:bigNevan}$,
and let $g(v)=f_j\circ\phi^{-1}(x_j+\rho_jv)$
be a non-constant quasiregular mapping from $X$ to $N$
as in Lemmas \ref{th:Miniowitz} and \ref{th:rescalesing}, respectively, 
associated to this $x'$, where $X$ is either $\bR^2$ or $\bR^2\setminus\{0\}$, and
$f_j\equiv f$ in the case that $x'$ is as in Lemma $\ref{th:bigNevan}$.

Let $E$ be a subset in $N$ containing more than $4$ points.
Then by Nevanlinna's four totally ramified values theorem (Theorem \ref{th:Nevanlinna}),
$g^{-1}(E)\setminus B_g\neq\emptyset$.
Fix subdomains $U$ in $N$ intersecting $E$ small enough that 
some component $V$ of $g^{-1}(U)$ is relatively compact in $X\setminus B_g$.
Then $g:V\to U$ is univalent, and 
by the locally uniform convergence \eqref{eq:Miniowitz} or \eqref{eq:YO} on $X$
and the Hurwitz-type theorem (Lemma \ref{lemma:Hurwitz}),
for every $j\in\bN$ large enough, there exists $v_j\in V$ such that 
$\phi^{-1}(x_j+\rho_jv_j)\in f_j^{-1}(E)\setminus B_{f_j}$. 
Furthermore, $\lim_{j\to\infty}\phi^{-1}(x_j+\rho_jv)=x'$ 
uniformly on $v\in\overline{V}$. Thus
$\lim_{j\to\infty}\phi^{-1}(x_j+\rho_jv_j)=x'$ and
$x'\in\overline{\bigcup_{j\in\bN}f_j^{-1}(E)\setminus B_{f_j}}$.

Moreover, in the case that $x'$ is as in Lemma $\ref{th:bigNevan}$, 
then $\phi^{-1}(x_j+\rho_jv_j)\neq x'$ for every $j\in\bN$,
so $x'$ is accumulated by $\bigcup_{j\in\bN}f_j^{-1}(E)\setminus B_{f_j}
=f^{-1}(E)\setminus B_f$.
\end{proof}

Let $f\colon \Omega\to M$ be a non-constant local uniformly $K$-quasiregular mapping 
from an open subset $\Omega$ in a closed and oriented Riemannian $n$-manifold $M$, 
$n\ge 2$, to $M$. 
The following lemmas are elementary.

\begin{lemma}\label{th:invariance}
$f^{-1}(\cE(f))\subset\cE(f), f^{-1}(D_f)\subset D_f, f(D_f)\subset D_f, 
f^{-1}(F(f))\subset F(f), f(F(f))\subset F(f), f^{-1}(J(f))\subset J(f)$, and $f(J(f)\cap D_f)\subset J(f)$.
\end{lemma}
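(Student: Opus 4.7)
The plan is to establish the seven inclusions in an order that handles the straightforward formal manipulations first and isolates the one step requiring real work. First, $f^{-1}(\cE(f))\subset\cE(f)$ is immediate from the elementary inclusion $\bigcup_{k\ge 0}f^{-k}(x)\subset\bigcup_{k\ge 1}f^{-k}(f(x))$, which is finite when $f(x)\in\cE(f)$. The inclusions $f^{-1}(D_f)\subset D_f$ and $f(D_f)\subset D_f$ are read off directly from the definition $D_f=\operatorname{int}\bigcap_{k\ge 0}f^{-k}(\Omega)$: continuity of $f$ at $x$ pulls any neighborhood of $f(x)\in D_f$ lying in $\bigcap_k f^{-k}(\Omega)$ back to a neighborhood of $x$ with the same property, while openness of quasiregular mappings (Reshetnyak) sends an open neighborhood $U\subset D_f$ of $x$ to an open subset $f(U)$ of $\bigcap_k f^{-k}(\Omega)$.

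Next I would prove $f^{-1}(F(f))\subset F(f)$. Given $x$ with $f(x)\in F(f)$, fix a normality neighborhood $U$ of $f(x)$ in $F(f)$ and a neighborhood $U'$ of $x$ with $f(U')\subset U$. Any sequence $(k_j)$ admits a subsequence along which $f^{k_j}|_U$ converges locally uniformly; precomposing with the continuous map $f|_{U'}$ makes $\{f^{k_j+1}|_{U'}\}=\{f^{k_j}\circ f|_{U'}\}$ converge locally uniformly on $U'$. Adjoining $f^0=\id$ preserves normality, so $\{f^k\}|_{U'}$ is normal and $x\in F(f)$.

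The main step will be $f(F(f))\subset F(f)$. Take $x\in F(f)$, choose a relatively compact open neighborhood $U$ of $x$ with $\overline{U}\subset F(f)$, and set $V:=f(U)$, which is an open neighborhood of $f(x)$ by openness of $f$. Given a sequence $(k_j)$, extract a subsequence along which $f^{k_j+1}|_{\overline{U}}\to\psi$ uniformly, using normality of $\{f^m\}_{m\ge 1}$ on the open set $F(f)$ together with the compactness of $\overline{U}$. The identity $f^{k_j+1}(z)=f^{k_j}(f(z))$ forces $\psi(z)$ to depend only on $f(z)$, so the formula $\tilde\psi(w):=\psi(z)$ for any $z\in f^{-1}(w)\cap U$ defines a map $\tilde\psi\colon V\to M$. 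For a compact $L\subset V$, the preimage $L'':=f^{-1}(L)\cap\overline{U}$ is compact and surjects onto $L$, so choosing $z_w\in L''$ with $f(z_w)=w$ gives
\[
\sup_{w\in L}\da(f^{k_j}(w),\tilde\psi(w))\le\sup_{z\in L''}\da(f^{k_j+1}(z),\psi(z))\longrightarrow 0,
\]
establishing locally uniform convergence of $f^{k_j}|_V$ to the continuous map $\tilde\psi$. Hence $V\subset F(f)$ and $f(x)\in F(f)$.

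Finally, the two Julia set inclusions follow by complementation: $f^{-1}(J(f))\subset J(f)$ is the contrapositive of $f(F(f))\subset F(f)$, and $f(J(f)\cap D_f)\subset J(f)$ follows because if $x\in J(f)\cap D_f$ had $f(x)\in F(f)$ then $f^{-1}(F(f))\subset F(f)$ would force $x\in F(f)$, a contradiction. The only genuine obstacle is the descent argument for $f(F(f))\subset F(f)$; the key point is that openness of $f$ and compactness of $\overline{U}$ together allow uniform convergence on $\overline{U}$ to be transferred to uniform convergence on compact subsets of $V=f(U)$, by lifting each $w\in L$ to a preimage in the compact set $L''\subset\overline{U}$.
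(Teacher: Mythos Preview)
Your proof is correct and follows the same overall strategy as the paper: the $D_f$ inclusions come from continuity and openness of $f$, the Fatou inclusions from normality/Arzel\`a--Ascoli, and the Julia inclusions by complementation. The one place you diverge in presentation is the forward invariance $f(F(f))\subset F(f)$. You treat this as the ``main step'' and build an explicit descent: extract a subsequence $f^{k_j+1}\to\psi$ uniformly on a compact $\overline{U}\subset F(f)$, observe $\psi$ factors through $f$, and push the limit down to $V=f(U)$ by lifting compact sets via preimages. The paper instead uses the equicontinuity characterization of normality at a single point: since $\{f^k\circ f\}$ is equicontinuous at $y\in F(f)$ and $f$ is open, $\{f^k\}$ is equicontinuous at $f(y)$. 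Both arguments hinge on the same fact---openness of $f$ lets one lift nearby points of $f(y)$ to nearby points of $y$---but the paper's one-line equicontinuity formulation is considerably shorter; you might find it a useful shortcut in similar situations. Conversely, your argument has the virtue of making the descent explicit and would adapt more readily if one needed to track the actual limit function. Your treatment of $f^{-1}(J(f))\subset J(f)$ as a direct contrapositive is also slightly cleaner than the paper's case split into $J(f)\cap D_f$ and $J(f)\setminus D_f$.
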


\begin{proof}
 The first inclusion $f^{-1}(\cE(f))\subset\cE(f)$ is obvious.
 The inclusion $f^{-1}(D_f)\subset D_f$ immediately follows by the continuity and openness of $f$. 
 The inclusion $f(D_f)\subset D_f$ also follows by the continuity and openness of $f$.

 The inclusion $f^{-1}(F(f))\subset F(f)$
 follows by the continuity and openness of $f$ and the Arzel\`a-Ascoli theorem.
 Indeed, let $x\in f^{-1}(F(f))$. Then $\{f^k;k\in\bN\}$ is equicontinuous at $f(x)$, 
 so $\{f^k\circ f;k\in\bN\}$ is equicontinuous at $x$. Hence $x\in F(f)$.

 Similarly, the inclusion $f(F(f))\subset F(f)$
 also follows by the continuity and openness of $f$ and the Arzel\`a-Ascoli theorem.
 Indeed, let $x\in f(F(f))$, i.e., $x=f(y)$ for some $y\in F(f)$. Then
 $\{f^k\circ f;k\in\bN\}$ is equicontinuous at $y$, so
 $\{f^k;k\in\bN\}$ is equicontinuous at $x=f(y)$. Hence $x\in F(f)$.

 Let us show $f^{-1}(J(f))\subset J(f)$. The inclusion $f^{-1}(J(f)\setminus D_f)\subset J(f)$ follows from
 $f(D_f)\subset D_f$, which is equivalent to
 $f^{-1}(M\setminus D_f)\subset M\setminus D_f$, and $M\setminus D_f\subset J(f)$.
 The inclusion $f^{-1}(J(f)\cap D_f)\subset J(f)$ follows from 
 $J(f)\cap D_f=D_f\setminus F(f)$ and $f(F(f))\subset F(f)$.

 The final $f(J(f)\cap D_f)\subset J(f)$ follows from 
 $f^{-1}(F(f))\subset F(f)$,
 which implies $f(D_f\setminus F(f))\subset D_f\setminus F(f)$,
 and $J(f)\cap D_f=D_f\setminus F(f)$.
\end{proof}

\begin{lemma}\label{th:interior}
The interior of $J(f)\cap D_f$ is empty unless $J(f)=M$. 
\end{lemma}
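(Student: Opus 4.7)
The plan is to prove the contrapositive: if $J(f)\cap D_f$ has nonempty interior in $M$, then $J(f)=M$. Assume $U\subset J(f)\cap D_f$ is a nonempty open set. Because $U\subset D_f$, every iterate $f^k$ is a well-defined $K$-quasiregular mapping from $U$ into $M$, and because $U\subset J(f)$, every point of $U$ is a non-normality point of the family $\cF:=\{f^k|_U:k\in\bN\}$.

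The core step would be to apply the Miniowitz Montel-type theorem (Theorem \ref{th:Montel}) to $\cF$, viewed as $K$-quasiregular maps from the oriented Riemannian $n$-manifold $U$ to the closed oriented $n$-manifold $M$, at some fixed $x'\in U$. This yields an exceptional set $E=E(x')\subset M$ with $\#E\le q'(n,K)$ such that for every $y\in M\setminus E$,
\[
x'\in\overline{\bigcup_{k\ge 0}\{z\in U:f^k(z)=y\}}.
\]
Since a nonempty closure forces a nonempty set, there exist $k\ge 0$ and $z\in U$ with $f^k(z)=y$. Hence $\bigcup_{k\ge 0}f^k(U)\supset M\setminus E$, i.e., the forward orbit of $U$ misses at most $q'(n,K)$ points of $M$.

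To conclude, Lemma \ref{th:invariance} gives $f(J(f)\cap D_f)\subset J(f)$ and $f(D_f)\subset D_f$, so an immediate induction shows $f^k(U)\subset J(f)\cap D_f\subset J(f)$ for every $k\ge 0$. Combining this with the previous step, $J(f)\supset M\setminus E$, so $F(f)=M\setminus J(f)$ has at most $q'(n,K)$ points. But $F(f)$ is open in $M$, and any nonempty open subset of an $n$-manifold with $n\ge 2$ contains a homeomorph of a Euclidean ball and is therefore uncountable. Hence $F(f)=\emptyset$, i.e., $J(f)=M$.

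The main point deserving care is verifying the hypotheses of Theorem \ref{th:Montel} for the iterate family on a common open domain $U$, which is precisely where the inclusion $U\subset D_f$ is used to make every $f^k|_U$ a $K$-quasiregular mapping into $M$. The remaining ingredients, namely propagation of $J(f)\cap D_f$ along forward orbits via Lemma \ref{th:invariance} and the impossibility of a nonempty finite open set in an $n$-manifold with $n\ge 2$, are routine.
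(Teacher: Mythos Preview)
Your proof is correct and follows essentially the same approach as the paper: apply the Montel-type theorem (Theorem~\ref{th:Montel}) to the iterate family on an open $U\subset J(f)\cap D_f$ to see that $\bigcup_{k}f^k(U)$ omits only finitely many points, then use forward invariance (Lemma~\ref{th:invariance}) to conclude $J(f)=M$. The only cosmetic difference is the final step, where the paper passes to the closure and invokes closedness of $J(f)$ rather than arguing that $F(f)$ is a finite open set and hence empty.
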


\begin{proof}
Let $x\in J(f)$ be an interior point of $J(f)$, and fix an open neighborhood $U$ of $x$
in $M$ contained in $J(f)$. Then by the Montel-type theorem (Theorem \ref{th:Montel}),
we have $\#(M\setminus\bigcup_{k\in\bN}f^k(U))<\infty$, so 
$M=\overline{\bigcup_{k\in\bN}f^k(U)}$, which is in $J(f)$ by Lemma \ref{th:invariance}
and the closedness of $J(f)$.
\end{proof}

A cyclic Fatou component of $f$ is a component $U$ of $F(f)$ 
such that $f^p(U)\subset U$ for some $p\in\bN$, which is called a period of $U$
(under $f$).
The proof of the following
is almost verbatim to the Euclidean case 
and we refer to Hinkkanen--Martin--Mayer \cite[Proposition 4.9]{HMM04} for the details.

\begin{theorem}\label{th:classification}
Let $\Omega$ be an open subset in a closed and oriented Riemannian $n$-manifold $M$, 
$n\ge 2$, and 
$f\colon\Omega\to M$ be a non-elementary local uniformly quasiregular mapping. Then
a cyclic Fatou component $U$ of $f$ having a period $p\in\bN$ 
is one of the following:
\begin{itemize}
 \item[(i)] a singular $($or rotation$)$ domain of $f$, that is,
       $f^p\colon U\to f^p(U)$ is univalent and 
       the limit of any locally uniformly convergent sequence $(f^{pk_i})_i$ on $U$,
       where $\lim_{i\to\infty}k_i=\infty$, is non-constant, 
 \item[(ii)] an immediate attractive basin of $f$, that is, the sequence
       $(f^{pk})_k$ converges locally uniformly on $U$, 
       the limit is constant, and its value is in $U$, or
 \item[(iii)] an immediate parabolic basin of $f$, that is, 
       the limit of any locally uniformly convergent sequence
       $(f^{pk_i})_i$ on $U$,
       where $\lim_{i\to\infty}k_i=\infty$, is constant and its value is in $\partial U$. 
\end{itemize}
\end{theorem}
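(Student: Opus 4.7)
The plan is to follow the Fatou-type classification of \cite[Proposition 4.9]{HMM04}, verifying that its argument transfers to the local setting essentially verbatim. Replacing $f$ by $f^p$, we may assume $p=1$ and $f(U)\subset U$; since $U\subset F(f)\subset D_f$, each iterate $f^k$ is a $K$-quasiregular mapping on $U$ into $\bM$, and $\{f^k|_U\}$ is normal. Any locally uniform subsequential limit $g$ of $(f^{k_i})$ is either constant or non-constant $K$-quasiregular, and in the latter case openness of quasiregular branched covers together with $f^{k_i}(U)\subset U$ forces $g(U)\subset U$. The three stated alternatives correspond to: (i) every subsequential limit of $\{f^k|_U\}$ is non-constant; (ii) some subsequential limit is a constant $c\in U$; (iii) some subsequential limit is a constant $c\in\partial U$.

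For (i), fix $f^{k_i}\to g$ non-constant. Extract by a diagonal argument from the normal family $\{f^{k_{i+1}-k_i}|_U\}$ a sub-subsequence $f^{N_j}\to h$ with $h\circ g=g$, by writing $f^{k_{i_{j+1}}}=f^{N_j}\circ f^{k_{i_j}}$ and letting $j\to\infty$ while keeping $f^{k_{i_j}}\to g$. Since $g(U)$ is non-empty and open and $h=\id$ on it, unique continuation of quasiregular branched covers upgrades this to $h=\id_U$. Lemma \ref{lemma:Hurwitz} then shows $f^{N_j}|_K$ is injective for every compact $K\Subset U$ and all $j$ large; factoring $f^{N_j}=f\circ f^{N_j-1}$, any collision $f(x)=f(y)$ with $x\ne y$ in $U$ would propagate to $f^{N_j}$, a contradiction. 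Thus $f|_U$ is univalent, and $U$ is a rotation domain.

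For (ii) and (iii), suppose $f^{k_i}\to c\in\overline U$. A coexistence argument, exploiting openness of quasiregular limits and the Hurwitz-type Lemma \ref{lemma:Hurwitz}, rules out non-constant subsequential limits once a constant limit exists; hence every subsequential limit is constant. If $c\in U$, then $f(c)=c$ (by taking limits in $f^{k_i+1}\to f(c)$ and using uniqueness of the constant limit), and a standard attracting-fixed-point iteration shows $f^k\to c$ locally uniformly on all of $U$, yielding case (ii). If $c\in\partial U$, case (ii) is excluded, so every subsequential limit is constant with value in $\partial U$, yielding case (iii). The main obstacle is the coexistence and unique-continuation steps; in our local setting they require no modification because $U\subset D_f$ keeps every iterate within the region of quasiregularity, so the HMM proof transfers to the local uniformly quasiregular framework as stated.
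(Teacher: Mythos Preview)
Your proposal is correct and follows precisely the route the paper itself takes: the paper gives no proof for this theorem, stating only that the argument is ``almost verbatim to the Euclidean case'' and referring the reader to \cite[Proposition~4.9]{HMM04}. Your sketch reconstructs that Hinkkanen--Martin--Mayer argument and checks that it transfers to the local setting (using $U\subset D_f$ to keep all iterates $K$-quasiregular on $U$), which is exactly what the paper asks the reader to accept.
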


In the following sections, given a subset $S$ in $\bR^n$ and $a,b\in\bR$, we denote by $aS+b$ the set $\{av+b\in\bR^n;v\in S\}$.

\section{Proof of Theorem \ref{th:periodic}}
\label{sec:theorem1}

Let $\bM$ be a closed, oriented, and connected Riemannian $n$-manifold, 
$n\ge 2$, and 
$f \colon \bM\setminus S_f\to \bM$ be a non-constant local uniformly $K$-quasiregular mapping,
$K\ge 1$, where $S_f$ is a countable and closed subset in $\bM$ and
consists of isolated essential singularities of $f$ and their accumulation points in $\bM$.

\begin{lemma}
The interior of $J(f)$ is empty unless $J(f)=\bM$.
\end{lemma}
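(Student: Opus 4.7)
The plan is to split on whether the non-empty open subset $B$ of $J(f)$ meets the regular set $D_f$. If $B\cap D_f\neq\emptyset$, then since $D_f$ is open, $B\cap D_f$ is a non-empty open subset of $J(f)\cap D_f$, and Lemma \ref{th:interior} immediately yields $J(f)=\bM$. The main obstacle is the opposite case $B\subset \bM\setminus D_f=\overline{\bigcup_{k\ge 0}f^{-k}(S_f)}$, where no iterate $f^k$ is defined on a full neighborhood inside $B$, so the normal family machinery underlying Lemma \ref{th:interior} cannot be directly invoked.

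In this remaining case, the countable set $\bigcup_k f^{-k}(S_f)$ is dense in $B$, so I pick $y\in B$ with $f^{k_0}(y)\in S_f$ for some $k_0\ge 0$ and a neighborhood $U$ of $y$ with $U\subset B\cap\bigcap_{j=0}^{k_0-1}f^{-j}(\Omega)$. By openness of $f^{k_0}$, the set $f^{k_0}(U)$ is open in $\bM$. The non-empty countable closed subset $S_f\cap f^{k_0}(U)$ of the Polish space $f^{k_0}(U)$ cannot be perfect (a countable perfect set in a Polish space is forbidden by Baire), so it has a point $s$ isolated in $S_f\cap f^{k_0}(U)$; this $s$ is then isolated in $S_f$ itself, i.e., an isolated essential singularity of $f$. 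Picking $y_1\in U$ with $f^{k_0}(y_1)=s$ and shrinking to a neighborhood $U_1\subset U$ of $y_1$ with $f^{k_0}(U_1)\subset W$, where $W$ is a neighborhood of $s$ satisfying $W\cap S_f=\{s\}$, I reduce to a setup in which Theorem \ref{th:bP} applies at $s$.

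The big Picard-type theorem then yields $f(f^{k_0}(U_1)\setminus\{s\})\supset\bM\setminus E$ with $\#E\le q'(n,K)$, hence $f^{k_0+1}(U_1\setminus(f^{k_0})^{-1}(s))\supset\bM\setminus E$. On the other hand, the invariance $f(J(f)\setminus S_f)\subset J(f)$, which is immediate from $f^{-1}(F(f))\subset F(f)$ in Lemma \ref{th:invariance}, gives $f^{k_0+1}(U_1\setminus(f^{k_0})^{-1}(s))\subset J(f)$ upon $(k_0+1)$-fold iteration along points whose first $k_0+1$ iterates stay in $\bM\setminus S_f$. Therefore $\bM\setminus E\subset J(f)$, and since $E$ is finite and $\bM$ is a connected $n$-manifold with $n\ge 2$, the closedness of $J(f)$ forces $J(f)=\bM$.
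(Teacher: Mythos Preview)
Your proof is correct. For the case $B\cap D_f\neq\emptyset$ you and the paper proceed identically, via Lemma~\ref{th:interior}. The divergence is in the remaining case $B\subset\bM\setminus D_f=\overline{\bigcup_{k\ge 0}f^{-k}(S_f)}$. The paper disposes of this in one line, asserting that this closure of a countable set ``has no interior by the Baire category theorem''; but that inference is not valid as stated (a countable set can be dense in a non-empty open set, so its closure may well have interior), and the paper offers no further justification. You take a genuinely different route: rather than trying to exclude the case, you show it already forces $J(f)=\bM$, by locating via a Cantor--Bendixson argument an isolated essential singularity $s\in S_f$ inside the open set $f^{k_0}(U)$, applying the big Picard-type Theorem~\ref{th:bP} at $s$ to cover $\bM$ up to a finite exceptional set, and then propagating with the forward invariance $f(J(f)\setminus S_f)\subset J(f)$ (which, as you note, follows from $f^{-1}(F(f))\subset F(f)$ in Lemma~\ref{th:invariance}). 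Your argument is more substantial than the paper's one-liner, but it is also more robust: it yields a self-contained proof of the lemma without leaning on the unjustified Baire step.
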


\begin{proof}
 By Lemma \ref{th:interior}, the interior of $J(f)\cap D_f$ is empty
 unless $J(f)=\bM$. On the other hand, 
 $J(f)\setminus D_f=\overline{\bigcup_{k\ge 0}f^{-k}(S_f)}$, 
 which is the closure of a countable subset in $\bM$,
 has no interior by the Baire category theorem.
\end{proof}

Set
\begin{gather*}
J_1(f):= J(f)\setminus\overline{\bigcup_{k\ge 0}f^{-k}(S_f)}= J(f)\cap D_f\quad\text{and}\\
J_2(f):=\bigcup_{k\ge 0}f^{-k}(\{x\in S_f \colon x\ \mathrm{is\ isolated\ in\ }S_f\}).
\end{gather*}

The forthcoming arguments in this and the next sections rest on the following observation 
on the density of $J_1(f)\cup J_2(f)$ in $J(f)$.

\begin{lemma}\label{th:dense}
The set $J_1(f)\cup J_2(f)$ is dense in $J(f)$. Furthermore,
\begin{itemize}
\item[(i)] if $\#\bigcup_{k\ge 0}f^{-k}(S_f)<\infty$, then 
$J_1(f)\cup J_2(f)=J(f)$ and $\#J_2(f)<\infty$;
\item[(ii)] if $\#\bigcup_{k\ge 0}f^{-k}(S_f)=\infty$, 
then $J_1(f)=\emptyset$ and $J(f)=\overline{J_2(f)}$.
\end{itemize}
\end{lemma}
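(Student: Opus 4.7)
The plan is to reduce the density claim to showing $\bigcup_{k\ge 0}f^{-k}(S_f)\subset\overline{J_2(f)}$ and then to dispatch (i) and (ii) by exploiting the forward invariance of $D_f$ together with Theorem \ref{th:Montel}. Since $\bM\setminus D_f=\overline{\bigcup_{k\ge 0}f^{-k}(S_f)}$ and $F(f)\subset D_f$, I can write $J(f)=J_1(f)\cup(\bM\setminus D_f)$; hence once I establish $\bM\setminus D_f\subset\overline{J_2(f)}$, combined with the trivial $\overline{J_2(f)}\subset\bM\setminus D_f$, the density of $J_1(f)\cup J_2(f)$ in $J(f)$ follows.

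The first ingredient I need is that the set $I$ of isolated points of $S_f$ is dense in $S_f$. Since $S_f$ is countable and closed in $\bM$, any non-empty $V\cap S_f$ with $V$ open in $\bM$ is a countable, non-empty, locally compact Hausdorff space, so Baire category yields an isolated point of $V\cap S_f$, which is automatically isolated in $S_f$. Granting this, every $a\in S_f$ lies in $\overline{I}$. To pull this back through preimages, I would use that $f^k$ is non-constant and $K$-quasiregular on its domain, hence open by Reshetnyak's theorem: for $k\ge 0$ and any $y\in f^{-k}(a)$, every neighborhood $V$ of $y$ has $f^k(V)$ open containing $a$, hence meeting $I$, so $V$ meets $f^{-k}(I)\subset J_2(f)$. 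This gives $\bigcup_{k\ge 0}f^{-k}(S_f)\subset\overline{J_2(f)}$ and completes the density statement.

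For part (i), the finiteness of $\bigcup_{k\ge 0}f^{-k}(S_f)$ means the union is already closed and equal to $\bM\setminus D_f$; in particular $S_f$ itself is finite, so $I=S_f$ and $J_2(f)=\bigcup_{k\ge 0}f^{-k}(S_f)=\bM\setminus D_f$. Since $\bM\setminus D_f\subset J(f)$ and $J_1(f)=J(f)\cap D_f$, the identity $J_1(f)\cup J_2(f)=J(f)$ and the finiteness of $J_2(f)$ are immediate.

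For part (ii), it suffices to prove $J_1(f)=\emptyset$; the density statement then forces $J(f)=\overline{J_2(f)}$. Here I would combine Lemma \ref{th:invariance} (so $f^k(D_f)\subset D_f$ for every $k\ge 0$) with Theorem \ref{th:Montel}: the family $\{f^k|_{D_f}\}$ omits every value in $\bM\setminus D_f\supset\bigcup_{k\ge 0}f^{-k}(S_f)$, and the infiniteness hypothesis lets me pick $q'(n,K)+1$ distinct such omitted values, which would then be exceptional for the family at every point of $D_f$, violating Theorem \ref{th:Montel} unless $D_f\subset F(f)$. I expect the main subtle step to be the density of isolated points of $S_f$ in $S_f$ and the uniform treatment of $k=0$ and $k\ge 1$ through openness of $f^k$; the remaining pieces are essentially bookkeeping on top of the tools already collected in Section \ref{sec:background}.
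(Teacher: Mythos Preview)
Your proposal is correct and follows essentially the same route as the paper: density of isolated points of $S_f$ in $S_f$ gives $\overline{J_2(f)}=\overline{\bigcup_{k\ge 0}f^{-k}(S_f)}$, hence $J(f)=J_1(f)\cup\overline{J_2(f)}$; part (i) is bookkeeping, and part (ii) uses the Montel-type theorem exactly as you describe. The paper's proof is terser---it states the density of isolated points and the resulting identity $\overline{\bigcup_{k\ge 0}f^{-k}(S_f)}=\overline{J_2(f)}$ without justification, and for (ii) simply invokes Theorem~\ref{th:Montel} without spelling out the omitted-values argument---so your Baire-category step and your explicit use of openness of $f^k$ to pull back the density through preimages are welcome elaborations rather than a different method.
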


\begin{proof}
The density in $S_f$ of isolated points of $S_f$ implies
$\overline{\bigcup_{k\ge 0}f^{-k}(S_f)}=\overline{J_2(f)}$, so
$J_1(f)\cup\overline{J_2(f)}=J(f)$.
If $\#\bigcup_{k\ge 0}f^{-k}(S_f)<\infty$, then 
$J_2(f)=\bigcup_{k\ge 0}f^{-k}(S_f)=\overline{J_2(f)}$, 
so $J(f)=J_1(f)\cup J_2(f)$ and $\#J_2(f)<\infty$.
If $\#\bigcup_{k\ge 0}f^{-k}(S_f)=\infty$, 
then by the Montel-type theorem (Theorem \ref{th:Montel}), 
we have $J_1(f)=\emptyset$, so $J(f)=J_1(f)\cup\overline{J_2(f)}=\overline{J_2(f)}$.
\end{proof}

The following is a simple application
of the rescaling theorems (Theorems \ref{th:Miniowitz} and \ref{th:rescalesing}) 
to points in the dense subset $J_1(f)\cup J_2(f)$ in $J(f)$. 
We leave the details to the interested reader.

\begin{lemma}\label{lem:g}
Let $a\in J_1(f)\cup J_2(f)$ and 
let $\phi:D \to \R^n$ be a coordinate chart of $\bM$ at $a$.
Then there exist 
\begin{itemize}
 \item[(i)]  sequences $(x_m)$ in $\R^n$ and $(\rho_m)$ in $(0,\infty)$,
	which respectively tend to $\phi(a)$ and $0$ as $m\to \infty$, 
 \item[(ii)] a sequence $(k_m)$ in $\bN$, which is constant when $a\in J_2(f)$, and
 \item[(iii)] a non-constant $K$-quasiregular mapping $g\colon X\to\bM$, where $X$ is either $\R^n$ or $\R^n\setminus\{0\}$, and
       $X=\bR^n$ when $a\in J_1(f)$,
\end{itemize}
such that
\begin{gather}
 \lim_{m\to\infty}f^{k_m}\circ\phi^{-1}(x_m+\rho_m v)=g(v)\label{eq:juliarescaling}
\end{gather}
locally uniformly on $X$.
\end{lemma}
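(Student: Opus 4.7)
The plan is to split into the two cases $a\in J_1(f)$ and $a\in J_2(f)$ and to invoke Theorems \ref{th:Miniowitz} and \ref{th:rescalesing} respectively, since in the first case $a$ is a non-normality point of the iteration family while in the second case $a$ is (mapped by some fixed iterate onto) an isolated essential singularity.

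For $a\in J_1(f)=J(f)\cap D_f$, there is an open neighborhood $U\subset D_f$ of $a$ on which every iterate $f^k$ is defined and $K$-quasiregular, and the family $\cF:=\{f^k|_U:k\in\bN\}$ fails to be normal at $a$ by definition of $J(f)$. I would apply Theorem \ref{th:Miniowitz} to $\cF$ at $x'=a$ with the chart $\phi$; the Miniowitz rescaling yields the sequences $(x_m)\to\phi(a)$, $(\rho_m)\to 0^+$, an index sequence $(k_m)\subset\bN$ (read off from which members of $\cF$ are selected), and a non-constant $K$-quasiregular limit $g\colon\bR^n\to\bM$ so that \eqref{eq:juliarescaling} holds. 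In particular $X=\bR^n$, consistent with (iii).

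For $a\in J_2(f)$, let $k\ge 0$ be the smallest integer with $x_0:=f^k(a)$ isolated in $S_f$, and set $\ell:=\max\{k,1\}$. Using the isolation of $x_0$ in $S_f$ together with the fact that $f^k$ is a $K$-quasiregular branched cover on a neighborhood of $a$, I would shrink down to a normal neighborhood $U$ of $a$ with respect to $f^k$ satisfying $f^{-k}(x_0)\cap U=\{a\}$ and $f^k(U)\setminus\{x_0\}\subset\bM\setminus S_f$. Local uniform $K$-quasiregularity of $f$ then gives that $f^\ell=f\circ f^k\colon U\setminus\{a\}\to\bM$ is $K$-quasiregular. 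Once $a$ is seen to be an isolated essential singularity of $f^\ell$, Theorem \ref{th:rescalesing} applied to $f^\ell$ at $a$ yields the asserted sequences with $k_m\equiv\ell$ constant and a non-constant $K$-quasiregular $g\colon X\to\bM$ with $X\in\{\bR^n,\bR^n\setminus\{0\}\}$.

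The main obstacle I foresee is the essentiality check, which is nontrivial only for $k\ge 1$ (when $k=0$ we have $a=x_0$ by construction). The plan is a contradiction argument: if $f^\ell$ extends continuously to $a$ with some value $y\in\bM$, then the openness and discreteness of $f^k$ on $U$ together with the normal-neighborhood property $\diam(f^{-k}(z)\cap U)\to 0$ as $z\to x_0$ allow me to lift any sequence $z_n\to x_0$ to a sequence $x_n\to a$ with $f^k(x_n)=z_n$; continuity of $f^\ell$ at $a$ would then force $f(z_n)=f^\ell(x_n)\to y$, giving a continuous extension of $f$ at $x_0$ and contradicting essentiality of $x_0$ as a singularity of $f$. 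With that verified, the rest is a direct appeal to the rescaling machinery already at our disposal.
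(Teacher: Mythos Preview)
Your approach matches the paper's exactly: split into the cases $a\in J_1(f)$ and $a\in J_2(f)$ and invoke Theorems~\ref{th:Miniowitz} and~\ref{th:rescalesing} respectively (the paper in fact supplies no details at all, leaving them to the reader, so you are already doing more than it does). One slip to fix: your formula $\ell=\max\{k,1\}$ is inconsistent with your own equation $f^\ell=f\circ f^k$ --- for $k\ge 1$ this would give $f^\ell=f^k$, which is continuous at $a$ and hence has no essential singularity there; you want $\ell=k+1$, and with that correction your essentiality argument (lifting a sequence $z_n\to x_0$ through the proper map $f^k|_U$) goes through as written.
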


We show the remaining assertions in Theorem \ref{th:periodic} in separate lemmas. We continue to use the notation $q'(n,K)$ introduced in Section \ref{sec:background}.

We first show both the non-triviality of the Julia set $J(f)$ 
and the finiteness of the exceptional set $\cE(f)$ for non-injective $f$.

\begin{lemma}\label{lem:finiteness}
If $S_f\neq\emptyset$, then $f$ is non-injective, $J(f)\neq\emptyset$, and 
$\#\cE(f)\le q'(n,K)$. If $S_f=\emptyset$ and $f$ is not injective, 
then $J(f)\neq\emptyset$, $\cE(f)\subset F(f)$, and $\#\cE(f)\le q'(n,K)$.
\end{lemma}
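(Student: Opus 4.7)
The plan is to combine Theorem~\ref{th:bP} (when $S_f\neq\emptyset$) and Theorem~\ref{th:Montel} (when $S_f=\emptyset$) with a structural analysis of $\cE(f)$. First, suppose $S_f\neq\emptyset$ and fix an isolated essential singularity $x'\in S_f$. By Theorem~\ref{th:bP}, $f^{-1}(y)$ accumulates at $x'$, and is in particular infinite, for every $y\in\bM$ outside some set of at most $q'(n,K)$ points; this immediately gives non-injectivity of $f$, and since any $y\in\cE(f)$ forces $f^{-1}(y)\subset\bigcup_{k\ge 0}f^{-k}(y)$ to be finite, it also yields $\#\cE(f)\le q'(n,K)$. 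The inclusions $S_f\subset\bM\setminus D_f\subset J(f)$ give $J(f)\neq\emptyset$.

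Next, assume $S_f=\emptyset$ and $f$ is non-injective, so $f\colon\bM\to\bM$ is everywhere defined, surjective, and has topological degree $d:=\deg f\ge 2$. I would rule out $J(f)=\emptyset$ as follows: otherwise $\{f^k\}$ is normal on the compact manifold $\bM$, and a subsequence $f^{k_j}$ converges uniformly to a continuous $g\colon\bM\to\bM$. Since the mapping degree is continuous under uniform convergence of maps between closed oriented manifolds, $\deg g=\deg f^{k_j}=d^{k_j}$ for all large $j$; but $\deg g$ is a fixed integer while $d^{k_j}\to\infty$, a contradiction. Hence $J(f)\neq\emptyset$.

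The main obstacle is $\cE(f)\subset F(f)$; I would settle this by a structural analysis of $\cE(f)$ followed by a super-attracting normal form. Let $y\in\cE(f)$ and $E_y:=\bigcup_{k\ge 0}f^{-k}(y)$, a finite fully invariant set. Since $f$ is surjective, $f|_{E_y}\colon E_y\to E_y$ is a surjection on a finite set, hence a bijection, and every element of $E_y$ is periodic under $f$. For any $z\in E_y$ one has $f^k(z)=y$ for some $k$, so $y$ lies in the cycle of $z$; by periodicity the cycles coincide, so $z$ lies in the cycle $C$ of $y$, whence $E_y=C$. Writing $p$ for the period of $y$ and using the complete invariance $f^{-1}(C)=C$ together with the uniqueness of predecessors along $C$, we obtain $f^{-1}(f^j(y))=\{f^{j-1}(y)\}$, so $i(f^j(y),f)=d$ for every $j$; the chain rule then yields $i(y,f^p)=d^p\ge 2$, making $y$ a topologically super-attracting periodic point of $f$. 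The standard super-attracting normal form in the uniformly quasiregular setting (see e.g.~\cite[\S 4]{HMM04}) now produces a neighborhood $U$ of $y$ with $f^p(U)\Subset U$ and $(f^{pk})|_U\to y$ locally uniformly, placing $y\in F(f)$.

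Finally, the bound $\#\cE(f)\le q'(n,K)$ follows by applying Theorem~\ref{th:Montel} at any $x\in J(f)\setminus\cE(f)$, which is non-empty because $J(f)\neq\emptyset$ and $\cE(f)\subset F(f)$ have just been established. For each $y\in\cE(f)$, $E_y\subset\cE(f)\subset F(f)$ implies $x\notin\overline{E_y}=E_y$, so $y$ must lie in the at-most-$q'(n,K)$ exceptional set of Theorem~\ref{th:Montel} at $x$. The step I expect to demand the most care is the super-attracting normal form; if the reference~\cite{HMM04} does not apply verbatim in the local UQR setting, a self-contained substitute would contradict $y\in J(f)$ directly from Theorem~\ref{th:Miniowitz} and Lemma~\ref{lemma:Hurwitz} by contrasting the blow-up $i(y,f^{pk})=d^{pk}\to\infty$ with the bounded local indices forced on any non-constant quasiregular rescaled limit of $\{f^{pk}\}$ at $y$.
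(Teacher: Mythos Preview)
Your proof is correct and follows the same approach as the paper: big Picard (Theorem~\ref{th:bP}) for the $S_f\neq\emptyset$ case, the degree-under-uniform-limits argument for $J(f)\neq\emptyset$, a super-attracting analysis at points of $\cE(f)$ to get $\cE(f)\subset F(f)$, and Theorem~\ref{th:Montel} for the cardinality bound. The one cosmetic difference is that the paper makes the super-attracting step self-contained via the explicit H\"older estimate $|f^{pk}(x)-a'|\le C|x-a'|^{(i(a',f^p)^k/K)^{1/(n-1)}}$ from \cite[Theorem~III.4.7]{Rickman93} (cross-referencing \cite[Lemma~4.1]{HMM04}), rather than citing a normal form as a black box.
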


\begin{proof}
If $S_f\neq\emptyset$, then 
by the big Picard-type theorem (Theorem \ref{th:bP}), 
$f$ is not injective and $\#\cE(f)\le q'(n,K)$, 
and by the definition of $J(f)$, we have $\emptyset\neq S_f\subset
\bigcup_{k\ge 0}f^{-k}(S_f)\subset J(f)$.

From now on, suppose that $S_f=\emptyset$ and $f:\bM\setminus S_f \to\bM$ is non-injective. Then 
$\deg f\ge 2$. We show first that $J(f)\ne \emptyset$.
Indeed, suppose $J(f)=\emptyset$. Then, by compactness of $\bM$, 
there exists a sequence $(k_m)$ in $\bN$ tending to $\infty$ such that
$(f^{k_m})$ tends to a $K$-quasiregular endomorphism $h\colon \bM \to \bM$
uniformly on $\bM$. 
Then for every $m\in\bN$ large enough, $f^{k_m}$ is homotopic to $h$ 
and $\deg h = \deg(f^{k_m})= (\deg f)^{k_m}\to\infty$ as $m\to\infty$ by the homotopy invariance of the degree. 
This is a contradiction and $J(f)\ne \emptyset$.

We show now that $\cE(f)\subset F(f)$.
Let $a\in \cE(f)$. Since $\#\bigcup_{k\ge 0}f^{-k}(a)<\infty$,
$f$ restricts to a permutation of $\bigcup_{k\ge 0}f^{-k}(a)$. Thus
there exists $p\in \bN$ for which $f^p(a)=a$ and $i(a,f^p) = \deg(f^p)\ge 2$. 
Fix a local chart $\phi:D\to\bR^n$ at $a$ and identify $f^p$
with $\phi\circ f^p\circ\phi^{-1}$ in a neighborhood of $a':=\phi(a)$ where the composition is defined. Then
there exist a neighborhood $U$ of $a'$ and $C>0$ such that 
for every $k\in\bN$, $f^{pk}$ is a $K$-quasiregular mapping from $U$ onto its image,
and that for every $k\in\bN$ and every $x\in U$,
\begin{gather*}
 |f^{pk}(x)-f^{pk}(a')|\le C |x-a'|^{(i(a',f^p)^k/K)^{1/(n-1)}}
\end{gather*}
by \cite[Theorem III.4.7]{Rickman93} (see also \cite[Lemma 4.1]{HMM04}).
Then $\lim_{k\to\infty}f^{pk}=a'$
locally uniformly on $U$. Hence $a\in F(f)$.

Finally, we show $\#\cE(f)\le q'(n,K)$.
If $\#\cE(f)>q'(n,K)$, we may fix $A\subset\cE(f)$ 
such that $q'(n,K)<\# A<\infty$ 
and $A':=\bigcup_{k\ge 0}f^{-k}(A)\subset \cE(f)$. 
Then $q'(n,K)<\#A'<\infty$, and by the above description of each point in $\cE(f)$, 
$f^{-1}(A')=A'$. 
By $\#A'>q'(n,K)$ and Theorem \ref{th:Montel},
$J(f)\subset\overline{\bigcup_{k\in\bN}f^{-k}(A')}$, which contradicts that
$\overline{\bigcup_{k\in\bN}f^{-k}(A')}=\overline{A'}=A'\subset\cE(f)\subset F(f)$.
\end{proof}

We snow next the accumulation of the backward orbits under $f$
of non-exceptional points to $J(f)$ for non-injective $f$, which implies
the perfectness of $J(f)$ for non-elementary $f$.

\begin{lemma}\label{th:backward}
Suppose $f$ is not injective. Then, for every $z\in\bM\setminus\cE(f)$, 
each point in $J(f)$ is accumulated by $\bigcup_{k\ge 0}f^{-k}(z)$.
Moreover, if $f$ is non-elementary, then $J(f)$ is perfect.
\end{lemma}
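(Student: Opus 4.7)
The plan is to mimic the proofs of Theorems \ref{th:Montel} and \ref{th:bP}: apply the rescaling Lemma \ref{lem:g} at a well-chosen point of the dense subset $J_1(f)\cup J_2(f)\subset J(f)$ provided by Lemma \ref{th:dense}, and then push preimages of the target $z$ back along the rescaling via the Hurwitz-type Lemma \ref{lemma:Hurwitz}. The Picard constant $q'(n,K)$ controls how few points $g$ can miss, which is what makes the backward orbit hit $g(X)$.

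For the first assertion, fix $z\in\bM\setminus\cE(f)$, so $S:=\bigcup_{k\ge 0}f^{-k}(z)$ is infinite. Given $a\in J(f)$ and an arbitrary neighborhood $U$ of $a$, I want to exhibit a point of $S\setminus\{a\}$ in $U$. Using Lemma \ref{th:dense}, I pick $a'\in U\cap (J_1(f)\cup J_2(f))$; if $a\notin J_1(f)\cup J_2(f)$ I may take $a'\ne a$ from the outset, and otherwise I set $a'=a$. Applying Lemma \ref{lem:g} at $a'$ produces $(x_m)$, $(\rho_m)$, $(k_m)$ and a non-constant $K$-quasiregular $g\colon X\to\bM$ with \eqref{eq:juliarescaling}. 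By Theorem \ref{th:puncture}, $\bM\setminus g(X)$ has at most $q'(n,K)$ points, so $S\cap g(X)$ is infinite; choose two distinct $y_1,y_2\in S\cap g(X)$ with $f^{j_i}(y_i)=z$. For each $i$ pick $v_0^{(i)}\in g^{-1}(y_i)$ and a relatively compact neighborhood $V_i$ of $v_0^{(i)}$ in $X$ on which $g$ is proper onto its image. Lemma \ref{lemma:Hurwitz}, combined with the locally uniform convergence \eqref{eq:juliarescaling}, yields for every sufficiently large $m$ points $v_m^{(i)}\in V_i$ with
\begin{equation*}
b_m^{(i)}:=\phi^{-1}(x_m+\rho_m v_m^{(i)})\in f^{-k_m}(y_i)\subset f^{-(k_m+j_i)}(z)\subset S.
\end{equation*}
Both sequences $b_m^{(i)}$ converge to $a'$ as $m\to\infty$; since $v_0^{(1)}\ne v_0^{(2)}$ and $\rho_m\to 0$, they are eventually distinct, so at most one of them can be eventually equal to the single point $a$. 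The other therefore produces infinitely many elements of $S\setminus\{a\}$ in $U$.

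For the perfectness statement, I first note that $J(f)$ is automatically compact (closed in the compact manifold $\bM$), and that non-elementarity together with Lemma \ref{lem:finiteness} gives $J(f)\ne\emptyset$. To rule out isolated points, I use $J(f)\not\subset\cE(f)$ to fix $z\in J(f)\setminus\cE(f)$; the first assertion, applied to this $z$, accumulates every $a\in J(f)$ by $S\setminus\{a\}$, and Lemma \ref{th:invariance} ensures $S\subset J(f)$, so the approximating points lie in $J(f)$ itself.

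The main technical nuisance is ensuring that the approximating preimages differ from $a$ when $a\in J_1(f)\cup J_2(f)$ happens to lie in $S$ (e.g.\ when $a$ is periodic and in the backward orbit of $z$). The two-point argument above handles this cleanly: since $g(X)$ omits only finitely many points while $S$ is infinite, I have the freedom to use two distinct targets $y_1,y_2\in S\cap g(X)$ and their disjoint preimage neighborhoods, forcing the two convergent sequences to be eventually distinct.
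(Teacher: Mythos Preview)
Your argument is correct and follows essentially the same route as the paper: rescale at a point of $J_1(f)\cup J_2(f)$ via Lemma~\ref{lem:g}, use Theorem~\ref{th:puncture} to make the backward orbit of $z$ hit $g(X)$, pull back with Lemma~\ref{lemma:Hurwitz}, and run a two-point trick to avoid the degenerate case $b_m^{(i)}\equiv a$. One cosmetic slip: your stated reason for $b_m^{(1)}\ne b_m^{(2)}$ (``since $v_0^{(1)}\ne v_0^{(2)}$ and $\rho_m\to 0$'') is not the right one, since the $v_m^{(i)}$ need not converge to $v_0^{(i)}$; the actual reason is simply that $f^{k_m}(b_m^{(1)})=y_1\ne y_2=f^{k_m}(b_m^{(2)})$, which forces $b_m^{(1)}\ne b_m^{(2)}$ for every $m$. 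The paper achieves the same separation by choosing disjoint target domains $\overline{U_1},\overline{U_2}$ and arguing via the limit sets of $f^{k_m}(y_m^{(i)})$, but your pointwise version is if anything cleaner once the justification is corrected.
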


\begin{proof}
Fix $a\in J_1(f)\cup J_2(f)$.
Let $g(v)=\lim_{m\to\infty}f^{k_m}\circ\phi^{-1}(x_m+\rho_m v)$ be a 
non-constant quasiregular mapping from $X$ 
to $\bM$ as in Lemma \ref{lem:g} associated to this $a$. Then
$\#(\bM\setminus g(X))<\infty$ by Theorem \ref{th:puncture}.

Fix $z\in\bM\setminus\cE(f)$. Then 
we can choose subdomains $U_1$ and $U_2$ in $g(X)$ intersecting $\bigcup_{k\in\bN}f^{-k}(z)$ and having pair-wise disjoint closures so that, for each $i\in\{1,2\}$, some component $V_i$ of $g^{-1}(U_i)$ is relatively compact in $X$.

For each $i\in\{1,2\}$, $g\colon V_i\to U_i$ is proper. By the locally uniform convergence \eqref{eq:juliarescaling} on $X$ and 
Lemma \ref{lemma:Hurwitz}, 
$f^{k_m}(\phi^{-1}(x_m+\rho_m V_i))$ intersects $\bigcup_{k\ge 0}f^{-k}(z)$ 
for every $m\in\bN$ large enough.
Thus, for $m$ large enough, we may fix $v_m^{(i)}\in V_i$ satisfying
$y_m^{(i)}:=\phi^{-1}(x_m+\rho_m v_m^{(i)})\in\bigcup_{k\ge 0}f^{-k}(z)$.

Let $i\in \{1,2\}$.
By the uniform convergence $\lim_{m\to\infty}\phi^{-1}(x_m+\rho_m v)=a$ on $v\in\overline{V_i}$,
we have $\lim_{m\to\infty}y_m^{(i)}=a$, and, by the uniform convergence \eqref{eq:juliarescaling} on $\overline{V_i}$,
we have $\bigcap_{N\in\bN}\overline{\{f^{k_m}(y_m^{(i)});k\ge N \}}\subset
g(\overline{V_i})=\overline{U_i}$. 
Since $\overline{U_1}\cap\overline{U_2}=\emptyset$, $\{y_m^{(1)},y_m^{(2)}\}\neq\{a\}$ for $m\in\bN$ large enough.

Hence any point $a\in J_1(f)\cup J_2(f)$ 
is accumulated by $\bigcup_{k\in\bN}f^{-k}(z)$, and so is
any point in $J(f)$ by Lemma \ref{th:dense}.

If $f$ is non-elementary, then choosing $z\in J(f)\setminus\cE(f)$, 
we obtain the perfectness of $J(f)$ 
by the former assertion and $f^{-1}(J(f))\subset J(f)$.
\end{proof} 

We record the following consequence of Lemmas \ref{th:dense}, \ref{lem:finiteness}, and \ref{th:backward} as a lemma.

\begin{lemma}\label{th:stronger}
For non-elementary $f$, $J(f)$ is perfect, $\cE(f)$ is finite,
and any point in $J(f)$ is accumulated by $(J_1(f)\cup J_2(f))\setminus\cE(f)$.
\end{lemma}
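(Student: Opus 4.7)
The plan is to consolidate the three preceding lemmas. By the definition of non-elementarity, $f$ is non-injective, so Lemma~\ref{lem:finiteness} gives $\#\cE(f)\le q'(n,K)<\infty$, and Lemma~\ref{th:backward} supplies both the perfectness of $J(f)$ and the accumulation of every point of $J(f)$ by the backward orbit of any non-exceptional point. Thus the only new content to extract is the refinement that the accumulation can be realized from the specific dense subset $(J_1(f)\cup J_2(f))\setminus\cE(f)$.

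For that refinement, I would invoke the following elementary set-theoretic fact: if $A$ is dense in a perfect set $B$ and $F$ is a finite set, then every point of $B$ is accumulated by $A\setminus F$. Fixing $b\in B$ and a neighborhood $U$ of $b$, the perfectness of $B$ ensures that $U\cap B$ is infinite. By density of $A$ in $B$, a short induction (choosing pairwise disjoint small subneighborhoods of distinct points of $(U\cap B)\setminus\{b\}$, none containing $b$ or the previously produced points, and extracting a point of $A$ from each by $B\subset\overline{A}$) yields infinitely many points of $U\cap A$. Since $F\cup\{b\}$ is finite, $U\cap(A\setminus F)$ still contains a point distinct from $b$.

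Applying this with $B:=J(f)$ (perfect by Lemma~\ref{th:backward}), with $A:=J_1(f)\cup J_2(f)\subset J(f)$ (dense in $J(f)$ by Lemma~\ref{th:dense}), and with $F:=\cE(f)$ (finite by Lemma~\ref{lem:finiteness}), the claim follows. There is no real obstacle: the lemma functions as a bookkeeping consolidation of what has already been proved, packaging the pieces in the form that will be convenient for the arguments in Section~\ref{sec:final}.
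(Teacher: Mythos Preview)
Your proposal is correct and matches the paper's approach: the paper does not give a separate proof but simply records the lemma as a consequence of Lemmas~\ref{th:dense}, \ref{lem:finiteness}, and \ref{th:backward}, and your write-up merely spells out the elementary set-theoretic step (dense subset of a perfect set, minus a finite set, still accumulates to every point) that the paper leaves implicit.
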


Finally, the following lemma completes the proof of Theorem \ref{th:periodic}.

\begin{lemma}\label{lem:density}
If $f$ is non-elementary, then
any point in $J(f)$ is accumulated by the set of
all periodic points of $f$.
\end{lemma}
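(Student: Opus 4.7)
The plan is to reduce, via Lemma~\ref{th:stronger}, to showing that every $a \in (J_1(f)\cup J_2(f))\setminus\cE(f)$ is accumulated by periodic points; given such $a$ and $\epsilon>0$, I aim to produce a $(k_m+N)$-periodic point of $f$ in $\bar{B}(a,\epsilon)$ with $k_m+N\to\infty$. The strategy, in the spirit of Schwick and Bargmann, uses the rescaling of Lemma~\ref{lem:g} together with Brouwer's fixed point theorem applied to an inverse branch of a suitable iterate.

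Concretely, Lemma~\ref{lem:g} furnishes sequences $x_m\to\phi(a)$, $\rho_m\to 0^+$, $k_m\in\bN$, and a non-constant $K$-quasiregular $g\colon X\to\bM$ with $f^{k_m}\circ\phi^{-1}(x_m+\rho_m v)\to g(v)$ locally uniformly on $X$ (where $X=\bR^n$ or $\bR^n\setminus\{0\}$). By Theorem~\ref{th:puncture} the complement $\bM\setminus g(X)$ is finite; together with the finiteness of $\cE(f)$ and perfection of $J(f)$ from Lemma~\ref{th:stronger}, and the fact that $\dim g(B_g)\le n-2$, this makes $g(X)\cap(J(f)\setminus(\cE(f)\cup g(B_g)))$ non-empty. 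I pick $w$ therein. By Lemma~\ref{th:backward} applied with $z=a$, $w$ is accumulated by $\bigcup_N f^{-N}(a)$; using the codimension-$\ge 2$ nature of the branch sets $B_{f^N}$, I extract a preimage $u\in f^{-N}(a)$ arbitrarily close to $w$ with $u\notin g(B_g)\cup B_{f^N}$.

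Next comes the homeomorphism step. Pick $v^*\in g^{-1}(u)\setminus B_g$ and a normal neighborhood $V$ of $v^*$ such that $g|_V\colon V\to g(V)$ is a homeomorphism, $g(V)$ lies in a normal neighborhood of $u$ on which $f^N$ is a homeomorphism onto its image, and $W:=f^N(g(V))\supset\bar{B}(a,\epsilon)$. Setting $\Omega_m:=\phi^{-1}(x_m+\rho_m V)$, Lemma~\ref{lemma:Hurwitz} yields that $f^{k_m}|_{\Omega_m}\colon\Omega_m\to g(V)$ is a homeomorphism for $m$ large (matching the degree-one local data of $g|_V$), whence so is $f^{k_m+N}|_{\Omega_m}\colon\Omega_m\to W$. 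Since $\rho_m\to 0$, one has $\bar{\Omega}_m\subset\bar{B}(a,\epsilon)\subset W$ for $m$ large, so the inverse branch $h_m:=(f^{k_m+N}|_{\Omega_m})^{-1}$ restricts to a continuous self-map $\bar{B}(a,\epsilon)\to\bar{\Omega}_m\subset\bar{B}(a,\epsilon)$. Brouwer's fixed point theorem then yields a fixed point of $h_m$—hence a $(k_m+N)$-periodic point of $f$—lying in $\bar{\Omega}_m\subset\bar{B}(a,\epsilon)$. Since $\epsilon>0$ was arbitrary and $k_m+N\to\infty$, this delivers periodic points accumulating at $a$; the eventuality that $a$ itself is periodic is handled by passing to a subsequence of $m$ with $k_m+N$ not a multiple of the period of $a$, so that the fixed points produced are distinct from $a$.

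The main obstacle is the genericity step selecting $u$: one must avoid the countable union of sets of topological dimension at most $n-2$ given by $g(B_g)\cup\bigcup_N B_{f^N}$ while staying inside the possibly pathological discrete set $\bigcup_N f^{-N}(a)$, and in particular rule out the case that $a$ is simultaneously a critical value of every $f^N$. Should the direct approach falter, my fallback is to run a topological-degree version of the final step, applying a Brouwer-type principle directly to the proper branched cover $f^{k_m+N}|_{\Omega_m}\colon\Omega_m\to W$ satisfying $\bar{\Omega}_m\subset W$, which yields a fixed point of $f^{k_m+N}$ without requiring that $u$ be a regular preimage.
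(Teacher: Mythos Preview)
Your primary route—via univalent inverse branches and Brouwer—actually targets \emph{repelling} periodic points, which is more than Lemma~\ref{lem:density} asks, and both branch-avoidance steps are genuine gaps in this generality. First, the claim that $g(X)\cap\bigl(J(f)\setminus(\cE(f)\cup g(B_g))\bigr)\neq\emptyset$ rests only on $\dim g(B_g)\le n-2$ and perfectness of $J(f)$, but a perfect set can have dimension~$0$, so $J(f)\subset g(B_g)$ is not excluded; this is precisely why condition~(\ref{item:connected}) of Theorem~\ref{th:repdense}(b) imposes $\dim J(f)>n-2$. Second, selecting $u\in f^{-N}(a)\setminus B_{f^N}$ near $w$ from the \emph{countable discrete} backward orbit gets no help from the codimension of $B_{f^N}$; nothing prevents every $f^N$-preimage of $a$ near $w$ from lying in $B_{f^N}$. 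The paper isolates exactly this pair of unramification requirements as condition~\eqref{eq:unramify} in Lemma~\ref{th:balk}, and the hypotheses~(\ref{item:connected})--(\ref{item:surface}) of Theorem~\ref{th:repdense}(b) are what it takes to verify them. (A smaller slip: you cannot force $W\supset\bar B(a,\epsilon)$ for the prechosen~$\epsilon$, since the normal neighborhood of $u$ for $f^N$ has uncontrolled image size; applying Brouwer on $\bar\Omega_m$ itself, which does lie in $f^{k_m+N}(\Omega_m)$ for large~$m$, repairs this.)

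Your fallback is the paper's proof. The paper abandons univalence entirely: it fixes an open $U$ meeting $J(f)$, takes any $a\in(J_1(f)\cup J_2(f))\setminus\cE(f)$ with its rescaling~$g$, uses Lemma~\ref{th:backward} and Theorem~\ref{th:puncture} to find a point of $U\cap\bigcup_{k\ge 0} f^{-k}(a)\cap g(X)$, and then chooses $D_1\ni a$, a component $U_1\Subset U$ of $f^{-j_1}(D_1)$, and a component $V_1\Subset X$ of $g^{-1}(U_1)$ so that $f^{j_1}\circ g\colon V_1\to D_1$ is merely \emph{proper}. The fixed point comes from a degree computation: with $\psi(v):=\phi\circ f^{j_1}\circ g(v)-\phi(a)$ and $\psi_m(v):=\phi\circ f^{j_1+k_m}\circ\phi^{-1}(x_m+\rho_m v)-(x_m+\rho_m v)$, uniform convergence $\psi_m\to\psi$ and Lemma~\ref{lemma:Hurwitz} (stated for merely continuous $\psi_m$) give $\mu(0,\psi_m,V_1)=\mu(0,\psi,V_1)>0$ for large~$m$, hence a fixed point $y_m$ of $f^{j_1+k_m}$ with $f^{k_m}(y_m)\in U$. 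No branch-set avoidance is needed anywhere.
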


\begin{proof}
Fix an open subset $U$ in $\bM$ intersecting $J(f)$.
Let $a\in(J_1(f)\cup J_2(f))\setminus\cE(f)$, and
let $g(v)=\lim_{m\to\infty}f^{k_m}\circ\phi^{-1}(x_m+\rho_m v)$ be a 
non-constant quasiregular mapping from $X$ to $\bM$ as in Lemma \ref{lem:g} associated to this $a$, where $X$ is either $\R^n$ or $\R^n\setminus\{0\}$
and
$\phi \colon D\to \R^n$ is a coordinate chart of $\bM$ at $a$.
By Lemma \ref{th:backward} and Theorem \ref{th:puncture}, 
\begin{gather*}
 (U\cap\bigcup_{k\ge 0}f^{-k}(a))\cap g(X)\neq\emptyset.
\end{gather*}
Hence we can choose $j_1\in\bN\cup\{0\}$ and a subdomain $D_1\Subset D$ containing $a$ 
such that some component $U_1$ of $f^{-j_1}(D_1)$
is relatively compact in $U$ and that
some component $V_1$ of $g^{-1}(U_1)$ is relatively compact in $X$. Then 
$f^{j_1}\circ g:V_1\to D_1$ is proper.

Choose an open neighborhood $W\Subset X$ of $\overline{V_1}$ small enough that
$f^{j_1}\circ g(W)\Subset D$.
By the uniform convergence $\lim_{m\to\infty}\phi^{-1}(x_m+\rho_mv)=a\in D_1$ on 
$v\in\overline{W}$
and the uniform convergence \eqref{eq:juliarescaling} on $\overline{W}$,
we can define a mapping $\psi:\overline{W}\to\R^n$ and mappings $\psi_m:\overline{W}\to\R^n$
for every $m\in \bN$ large enough by
\begin{gather*}
\begin{cases}
 \psi(v):=\phi\circ f^{j_1} \circ g(v)-\phi(a)\quad\text{and}\\
 \psi_m(v):=\phi\circ f^{j_1} \circ f^{k_m}\circ\phi^{-1}(x_m+\rho_m v) - (x_m+\rho_m v),
\end{cases}
\end{gather*}
so that $\lim_{m\to\infty}\psi_m=\psi$ uniformly on $\overline{W}$.

The limit $\psi\colon V_1\to\psi(V_1)$ is non-constant, quasiregular, and proper, and
satisfies $0\in\psi(V_1)$ by $a\in D_1=f^{j_1}(g(V_1))$.
Although for each $m\in\bN$ large enough,
$\psi_m\colon V_1\to\bR^n$ is not necessarily quasiregular, 
we have $\lim_{m\to\infty}\mu(0,\psi_m,V_1)=\mu(0,\psi,V_1)>0$ after applying Lemma \ref{lemma:Hurwitz} to $(\psi_m)$ and $\psi$ on $\overline{V_1}$.
Thus $0\in\psi_m(V_1)$.

Hence for every $m\in \bN$ large enough, there exists $v_m\in V_1$
such that $y_m:=\phi^{-1}(x_m+\rho_m v_m)$ 
is a fixed point of $f^{j_1}\circ f^{k_m}$. Hence also $f^{k_m}(y_m)$ is a fixed point of $f^{j_1}\circ f^{k_m}$.
By the uniform convergence \eqref{eq:juliarescaling} on $\overline{V_1}$,
we have $\bigcap_{N\in\bN}\overline{\{f^{k_m}(y_m);k\ge N\}}\subset
g(\overline{V_1})=\overline{U_1}\subset U$, so $f^{k_m}(y_m)\in U$
for every $m\in\bN$ large enough.

We conclude that $J(f)$ is in the closure of the set of all periodic points
of $f$, so the perfectness of $J(f)$ completes the proof.
\end{proof}

\section{Proof of Theorem \ref{th:repdense}}
\label{sec:final}

Let $\bM$ be a closed, oriented, and connected Riemannian $n$-manifold, $n\ge 2$. 
Suppose $f\colon \bM\setminus S_f\to\bM$ is a non-elementary
local uniformly $K$-quasiregular mapping, $K\ge 1$, 
where $S_f$ is a countable and closed subset in $\bM$ and
consists of isolated essential singularities of $f$ and their accumulation points in $\bM$.
We continue to use the notations $J_1(f)$ and $J_2(f)$
introduced in Section \ref{sec:theorem1}. 

We first show the first assertion of Theorem \ref{th:repdense}.
\begin{lemma}
If $F(f)$ is non-empty and connected, then
every point in $J(f)$ is accumulated by the set of periodic points of $f$ 
contained in $J(f)$.
\end{lemma}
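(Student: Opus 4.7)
The plan is to reduce the statement to the auxiliary claim that the set of periodic points of $f$ contained in $F(f)$ has no accumulation point in $J(f)$. Once this is established, for any $x\in J(f)$ and any neighborhood $N$ of $x$, Theorem \ref{th:periodic}(c) yields infinitely many periodic points of $f$ in $N$, only finitely many of which can belong to $F(f)$, so the remaining infinitely many must belong to $J(f)$, which gives the desired accumulation of $J(f)$ by periodic points contained in $J(f)$.

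To establish the auxiliary claim, I would first observe that, since $F(f)$ is connected, non-empty, and satisfies $f(F(f))\subset F(f)$ by Lemma \ref{th:invariance}, $F(f)$ is itself a cyclic Fatou component of $f$ of period $p=1$. Theorem \ref{th:classification} then leaves three possibilities. If $F(f)$ is an immediate parabolic basin (case (iii)), every subsequential limit of $(f^k)$ on $F(f)$ is a constant lying in $\partial F(f)$; the finite forward orbit of any periodic $y\in F(f)$ would stay in $F(f)$ and could not cluster in $\partial F(f)$, so $F(f)$ contains no periodic points. If $F(f)$ is an immediate attractive basin with attracting fixed point $z_0\in F(f)$ (case (ii)), the full sequence $(f^k)$ converges locally uniformly on $F(f)$ to $z_0$, and the finite forward orbit of any periodic $y\in F(f)$ must then collapse to $\{z_0\}$, forcing $y=z_0$. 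In both cases, periodic points of $f$ in $F(f)$ form a finite set, and the auxiliary claim follows at once.

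The main obstacle is case (i), in which $F(f)$ is a rotation (singular) domain, $f\colon F(f)\to F(f)$ is univalent, and every locally uniformly convergent subsequence of $(f^k)$ on $F(f)$ has a non-constant limit. My plan here is first to show that some subsequence $(f^{k_i})$ converges to $\id$ locally uniformly on $F(f)$: by Lemma \ref{lemma:Hurwitz}, the locally uniform limits of univalent $K$-quasiregular mappings are either constant or univalent, and the non-constant-limit assumption of case (i) excludes the former, so the closure of $\{f^k|_{F(f)}:k\in\bN\}$ in the locally uniform topology consists of univalent self-maps of $F(f)$; this closure is a compact semigroup under composition and therefore contains an idempotent, which, being an injective idempotent self-map, must be the identity. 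Assuming for contradiction that distinct periodic points $y_n\in F(f)$ of periods $q_n$ accumulate at some $y^*\in\partial F(f)\subset J(f)$, the uniform convergence $f^{k_i}\to\id$ on a compact neighborhood of $y_n$ in $F(f)$ forces $f^{k_i}(y_n)=y_n$ for all sufficiently large $i$, i.e., $q_n$ divides $k_i$ eventually; a diagonal argument combined with continuity of $f$ at $y^*$ (when $y^*\in D_f$) or the rescaling theorem for essential singularities, Theorem \ref{th:rescalesing}, applied at $y^*$ (when $y^*\in S_f$), then produces a non-constant iterate limit whose image straddles $\partial F(f)$, contradicting invariance of $F(f)$ under the semigroup closure. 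Combining the three cases establishes the auxiliary claim and hence the lemma.
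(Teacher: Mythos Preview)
Your reduction to the auxiliary claim and your treatment of the attractive and parabolic cases (ii) and (iii) are correct and essentially match the paper's implicit reasoning: in each of these cases $F(f)$ contains at most one periodic point, and combined with Theorem~\ref{th:periodic}(c) and the perfectness of $J(f)$ this yields the lemma.

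The gap is in your handling of the rotation case (i). Even granting that some subsequence $f^{k_i}\to\id$ locally uniformly on $F(f)$ and that $q_n\mid k_i$ for $i$ large (depending on $n$), the contradiction you sketch is not a proof. The ``diagonal argument'' is not specified, and it is unclear which map is supposed to ``straddle $\partial F(f)$'' or why that would contradict anything: the limit maps in the semigroup closure are defined only on $F(f)$, not at $y^*\in J(f)$; continuity of a single iterate $f$ at $y^*\in D_f$ tells you nothing about high iterates there, since non-normality at $y^*$ is precisely the obstruction; and the rescaling limit $g\colon X\to\bM$ at an essential singularity has no evident relation to the semigroup closure on $F(f)$. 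In the quasiregular setting the definition of a rotation domain in Theorem~\ref{th:classification}(i) does not by itself rule out infinitely many periodic points in $F(f)$ accumulating on $\partial F(f)$, so a genuine additional input is required.

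The paper bypasses case (i) altogether by showing that, under the standing non-elementarity hypothesis, $f$ cannot be univalent on the connected Fatou set $F(f)$; by the very definition of a rotation domain this excludes case (i). The non-univalence is obtained by an elementary counting of preimages: if $S_f\neq\emptyset$, Theorem~\ref{th:bP} gives $\#f^{-1}(y)=\infty$ for all but finitely many $y\in F(f)$; if $S_f=\emptyset$ and $B_f\cap F(f)=\emptyset$, then $\deg f\ge 2$ and $f(B_f)\cap F(f)=\emptyset$, so every $y\in F(f)$ has $\deg f\ge 2$ preimages, all lying in $F(f)$ since $f^{-1}(F(f))\subset F(f)$; and if $S_f=\emptyset$ and $B_f\cap F(f)\neq\emptyset$, then $f$ already fails to be locally injective on $F(f)$. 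With case (i) eliminated, only the attractive and parabolic cases remain, and there your argument (and the paper's) finishes the proof.
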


\begin{proof}
By the assumption, $F(f)$ is a fixed cyclic Fatou component of $f$. We show first that $f$ is not univalent on $F(f)$. 

We consider three cases separately.
In the case $S_f\neq\emptyset$, by the big Picard-type theorem (Theorem \ref{th:bP}), 
for every $y\in F(f)$ except for at most finitely many points,
we have $\# f^{-1}(y)=\infty$.
In the case that $S_f=\emptyset$ and $B_f\cap F(f)=\emptyset$, we have $\deg f\ge 2$,
and also $f(B_f)\cap F(f)=\emptyset$ by $f^{-1}(F(f))\subset F(f)$. Thus
$\# f^{-1}(y)=\deg f\ge 2$ for every $y\in F(f)$.
Since $f^{-1}(F(f))\subset F(f)$, $f$ is not univalent on $F(f)$ in these two cases.

Suppose now that $S_f=\emptyset$ and $B_f\cap F(f)\neq\emptyset$.
By the classification of cyclic Fatou components (Theorem \ref{th:classification}), $F(f)$ is a fixed immediate either attractive or parabolic basin of $f$.
So all the periodic points constructed in Lemma \ref{lem:density},
but at most one, are in $J(f)=\bM\setminus F(f)$.
\end{proof}

Next, we give a useful criterion for the repelling density in $J(f)$. 

\begin{lemma}\label{th:balk}
Let $a\in (J_1(f)\cup J_2(f))\setminus\cE(f)$ and suppose that a non-constant quasiregular mapping $g$ in Lemma $\ref{lem:g}$ associated to this $a$
satisfies the unramification condition
\begin{gather}
 a\not\in \bigcup_{k\in\bN}f^k(B_{f^k})\quad\text{and}\quad 
 J(f)\cap g(X\setminus B_g)\neq\emptyset.\label{eq:unramify} 
\end{gather}
Then every point in $J(f)$ is accumulated by the set of
all repelling periodic points of $f$.
\end{lemma}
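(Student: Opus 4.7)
My plan is to combine a refined version of the construction in Lemma \ref{lem:density} with the unramification hypothesis to produce repelling periodic points in a nonempty open subset of $J(f)$, and then to spread their density to all of $J(f)$ using forward invariance of periodic orbits and backward-orbit density.

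First, I would produce a repelling periodic point in any open $V \subset \bM$ that meets $W := g(X \setminus B_g) \cap J(f)$. Since $a \notin \cE(f)$, Lemma \ref{th:backward} gives density of $\bigcup_j f^{-j}(a)$ in $J(f)$, so I can pick $b_1 \in V \cap g(X \setminus B_g) \cap f^{-j_1}(a)$ for some $j_1 \in \bN$; the unramification $a \notin f^{j_1}(B_{f^{j_1}})$ forces $b_1 \notin B_{f^{j_1}}$, and $b_1 \in g(X \setminus B_g)$ yields a preimage $v_1 \in X \setminus B_g$ with $g(v_1) = b_1$. Shrinking appropriately gives nested neighborhoods $v_1 \in V_1 \Subset X \setminus B_g$, $b_1 \in U_1 \Subset V$, $a \in D_1 \Subset D$ with $g|_{V_1} \colon V_1 \to U_1$ and $f^{j_1}|_{U_1} \colon U_1 \to D_1$ both homeomorphisms. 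With $\psi$ and $\psi_m$ as in Lemma \ref{lem:density}, $\psi|_{V_1}$ is now a homeomorphism and $0 \in \psi(V_1)$ since $f^{j_1}(g(v_1)) = a$. By Lemma \ref{lemma:Hurwitz}, $\mu(0, \psi_m, V_1) = \mu(0, \psi, V_1) = 1$ for $m$ large, giving a unique zero $v_m \in V_1$; correspondingly $y_m := \phi^{-1}(x_m + \rho_m v_m) \to a$ is a fixed point of $f^{j_1+k_m}$ and $z_m := f^{k_m}(y_m) \to b_1 \in V$, so $z_m \in V$ for $m$ large.

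Next I would verify that $y_m$ (equivalently, $z_m$, lying on the same orbit) is topologically repelling. Univalence follows from a second use of Lemma \ref{lemma:Hurwitz}: the maps $f_m(v) := f^{k_m} \circ \phi^{-1}(x_m + \rho_m v)$ converge uniformly on $\overline{V_1}$ to the injective $g|_{V_1}$, so the local degree is identically $1$ and $f_m|_{V_1}$ is injective for $m$ large. Hence $f^{k_m}$ is injective on $N_m := \phi^{-1}(x_m + \rho_m V_1')$ for a slight shrink $V_1' \Subset V_1$; since $f^{k_m}(N_m) \to g(V_1') \Subset U_1$, the image lies in $U_1$ for $m$ large, where $f^{j_1}|_{U_1}$ is injective. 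Composing gives $f^{j_1+k_m}|_{N_m}$ a homeomorphism. For expansion, $\diam N_m = O(\rho_m) \to 0$ with $y_m \to a$, while applying Lemma \ref{lemma:Hurwitz} again to $f^{j_1} \circ f_m \to f^{j_1} \circ g$ on $\overline{V_1'}$ shows that any compactum of the fixed open set $f^{j_1}(g(V_1')) \Subset D_1 \ni a$ is contained in $f^{j_1+k_m}(N_m)$ for $m$ large, so $\overline{N_m} \subset f^{j_1+k_m}(N_m)$ and $N_m \Subset f^{j_1+k_m}(N_m)$. This is precisely the definition of topological repelling, so $z_m \in V$ is a repelling periodic point, and the set $R$ of repelling periodic points is dense in $W$.

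Finally, I would propagate density from $W$ to all of $J(f)$. Given an arbitrary open $U \subset \bM$ with $U \cap J(f) \neq \emptyset$, I pick $\tilde y \in U \cap J(f) \setminus \cE(f)$, which is possible because $\cE(f)$ is finite by Lemma \ref{th:stronger} while $U \cap J(f)$ is infinite by perfectness of $J(f)$. By Lemma \ref{th:backward}, $\bigcup_k f^{-k}(\tilde y)$ is dense in $J(f)$, so it intersects the nonempty open-in-$J(f)$ set $W$: there exists $k_1 \in \bN$ with $W \cap f^{-k_1}(\tilde y) \neq \emptyset$. Hence $W \cap f^{-k_1}(U)$ is a nonempty open subset of $W$, and by density of $R$ in $W$ it contains a repelling periodic point $z^*$. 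Then $f^{k_1}(z^*) \in U$ lies on the same periodic orbit as $z^*$ and is therefore itself repelling periodic (the orbit of a topologically repelling point avoids $B_f$, so $f^{k_1}$ is locally a homeomorphism at $z^*$ and transports the repelling neighborhood of $z^*$ to one at $f^{k_1}(z^*)$), giving the required point in $U$. The main obstacle I expect is the topological repelling verification, in particular obtaining the strict set-containment $\overline{N_m} \subset f^{j_1+k_m}(N_m)$ by combining the Hurwitz-based univalence on the rescaled domain with a Hausdorff-convergence argument matching the shrinking diameter $O(\rho_m)$ against the fixed image; the first and third paragraphs mirror Lemma \ref{lem:density} and standard transitivity arguments.
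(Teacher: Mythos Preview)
Your proposal is correct and rests on exactly the same engine as the paper: use the rescaling limit $g$ together with the unramification hypothesis to arrange that $f^{j_1}\circ g$ is univalent on a small piece $V_1$, apply the $\psi$/$\psi_m$ Hurwitz argument of Lemma~\ref{lem:density} to produce fixed points $y_m$ of $f^{j_1+k_m}$, and then deduce the compact containment $\phi^{-1}(x_m+\rho_m V_1)\Subset f^{j_1+k_m}(\phi^{-1}(x_m+\rho_m V_1))$ from the fact that the left side shrinks to $\{a\}$ while the right side limits onto the fixed open set $f^{j_1}(g(V_1))\ni a$.

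The only organizational difference is how you reach an \emph{arbitrary} open set $U$ meeting $J(f)$. The paper does this in one pass: it first pulls $a$ back by some $f^{-j_1}$ into $U$, then pulls that point back by some $f^{-j_2}$ into $g(X\setminus B_g)$, and runs the construction with $f^{j_1+j_2}\circ g$, landing the periodic point $f^{j_2+k_m}(y_m)$ directly in $U$. You instead first prove density of repelling periodic points in $W=J(f)\cap g(X\setminus B_g)$ and then push forward along the orbit into $U$ via $f^{k_1}$. Both routes are equivalent and both implicitly use that every point on the cycle of a topologically repelling periodic point is again topologically repelling; you state and justify this (the cycle avoids $B_f$, so the repelling neighborhood transports), whereas the paper leaves it tacit. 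Two small points to tighten in your write-up: fix a single compact neighborhood $E$ of $a$ inside $f^{j_1}(g(V_1'))$ before invoking Lemma~\ref{lemma:Hurwitz} (so that the $m$-threshold is uniform and $\overline{N_m}\subset E$ for large $m$), and note that $v_m\to v_1$ so that eventually $v_m\in V_1'$.
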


\begin{proof}
Let $a\in(J_1(f)\cup J_2(f))\setminus\cE(f)$
and let $g(v)=\lim_{m\to\infty}f^{k_m}\circ\phi^{-1}(x_m+\rho_m v)$ be a 
non-constant quasiregular mapping from $X$ to $\bM$ 
as in Lemma \ref{lem:g} associated to this $a$, 
where $\phi \colon D\to \R^n$ is a coordinate chart of $\bM$ at $a$, and
suppose that these $a$ and $g$ satisfy \eqref{eq:unramify}.

Fix an open subset $U$ in $\bM$ intersecting $J(f)$.
By Lemma \ref{th:backward} and $\#\cE(f)<\infty$, 
there exists $j_1\in\bN\cup\{0\}$ such that $(f^{-j_1}(a)\cap U)\setminus\cE(f)\neq\emptyset$.
By the latter condition in \eqref{eq:unramify},
$g(X\setminus B_g)$ is an open subset in $\bM$ intersecting $J(f)$.
Thus, by Lemma \ref{th:backward}, there exists $j_2\in\bN\cup\{0\}$
such that $f^{-j_2}((f^{-j_1}(a)\cap U)\setminus\cE(f))\cap g(X\setminus B_g)\neq\emptyset$. 
Hence by the first condition in \eqref{eq:unramify},
we can choose a subdomain $D_1\Subset D\setminus f^{j_1+j_2}(B_{f^{j_1+j_2}})$ 
containing $a$ such that
some component $U_1$ of $f^{-j_1}(D_1)$ is relatively compact in $U$ and
that some component $V_1$ of $g^{-1}(f^{-j_2}(U_1))$ is relatively compact in $X\setminus B_g$.
Then $f^{j_1+j_2}\circ g:V_1\to D_1$ is univalent.

By the same argument as
in the proof of Lemma \ref{lem:density},  
we may choose, for every $m\in \bN$ large enough, a point $v_m\in V_1$ such that $y_m:=\phi^{-1}(x_m+\rho_m v_m)$ 
is a fixpoint of $f^{j_1+j_2}\circ f^{k_m}$. 
By the uniform convergence \eqref{eq:juliarescaling} on $\overline{V_1}$,
we have $\bigcap_{N\in\bN}\overline{\{f^{j_2}\circ f^{k_m}(y_m);k\ge N\}}\subset
f^{j_2}(g(\overline{V_1}))=\overline{U_1}\subset U$. Thus
$f^{j_2}\circ f^{k_m}(y_m)\in U$ for every $m\in\bN$ large enough.

Moreover, by the locally uniform convergence \eqref{eq:juliarescaling} on $X$ and Lemma \ref{lemma:Hurwitz}, 
the mapping $v\mapsto f^{j_1+j_2}\circ f^{k_m}\circ\phi^{-1}(x_m+\rho_m v)$
is a univalent mapping from $V_1$ onto its image for every $m\in \bN$ large enough. Hence
\begin{gather*}
 f^{j_1+j_2}\circ f^{k_m}:\phi^{-1}(x_m+\rho_m V_1)\to
 f^{j_1+j_2}\circ f^{k_m}(\phi^{-1}(x_m+\rho_m V_1))
\end{gather*}
is univalent for $m\in \bN$ large enough. By the uniform convergence 
\begin{gather*}
 \lim_{m\to\infty}\phi^{-1}(x_m+\rho_mv)=a\in D_1=f^{j_1+j_2}\circ g(V_1) 
\end{gather*}
on $v\in\overline{V_1}$ and the uniform convergence \eqref{eq:juliarescaling} on $\overline{V_1}$,
\begin{gather*}
 \phi^{-1}(x_m+\rho_m V_1)\Subset f^{j_1+j_2}\circ f^{k_m}(\phi^{-1}(x_m+\rho_m V_1))
\end{gather*}
for every $m\in\bN$ large enough.
Hence for every $m\in \bN$ large enough, $y_m$ 
is a repelling fixed point of $f^{j_1+j_2}\circ f^{k_m}$. 

We conclude that $J(f)$ is in the closure of the set of all repelling periodic points of $f$, so 
the perfectness of $J(f)$ completes the proof.
\end{proof}

We show the latter assertion of Theorem \ref{th:repdense} under the conditions given there, 
separately. 

{\bfseries Condition (\ref{item:connected}).} 
Suppose $\#\bigcup_{k\ge 0}f^{-k}(S_f)<\infty$. 
Then by Lemmas \ref{th:dense} and \ref{lem:finiteness}, we have
$\#(J_2(f)\cup\cE(f))<\infty$ and $J_1(f)=J(f)\setminus J_2(f)$. 
Suppose also that $\dim J(f)\ge n-1$.
For every $k\in\bN$, $\dim f^k(B_{f^k})\le n-2$, and then
$\dim(\bigcup_{k\in \bN} f^k(B_{f^k}))\le n-2$
(\cite[\S 2.2, Theorem III]{HurewiczW:Dimt}).
Hence we can fix 
$a\in J(f)\setminus(J_2(f)\cup\cE(f)\cup\bigcup_{k\in \bN} f^k(B_{f^k}))
=J_1(f)\setminus(\cE(f)\cup\bigcup_{k\in\bN}f^k(B_{f^k}))$,
and let $g\colon \bR^n\to \bM$ be a non-constant quasiregular mapping as in Lemma \ref{lem:g}
associated to this $a$. Then $\dim g(B_g)\le n-2$,
so $J(f)\cap g(\bR^n\setminus B_g)\neq\emptyset$.

The unramification condition \eqref{eq:unramify} is satisfied by these $a$ and $g$,
and Lemma \ref{th:balk} completes the proof in this case.

{\bfseries Condition (\ref{item:rep}).}
Let $a$ be a repelling periodic point of $f$ having a period $p\in\bN$
in $D_f\setminus(\cE(f)\cup\bigcup_{k\in\bN}f^k(B_{f^k}))$.
Then $a\in (J(f)\setminus\cE(f))\cap D(f)=J_1(f)\setminus\cE(f)$.
Let $g(v)=\lim_{m\to\infty}f^{k_m}\circ\phi^{-1}(x_m+\rho_m v)$ be a non-constant
quasiregular mapping from $\R^n$ to $\bM$ as in Lemma \ref{lem:g} associated to this $a$,
where $\phi\colon D\to \R^n$ is a coordinate chart of $\bM$ at this $a$.
By 
\cite[Theorem 6.3]{HMM04}, we may, in fact, 
assume that $x_m\equiv\phi(a)$ and $p|k_m$ for all $m\in \bN$, and
$g$ is in this case usually called a \emph{Koenigs mapping of $f^p$ at $a$}.
Then $g(0)=a$, and by the proof of \cite[Theorem 6.3]{HMM04}, we also have $0\not \in B_g$. 
Hence $a\in J(f)\cap g(\R^n\setminus B_g)$,  
and \eqref{eq:unramify} is satisfied by these $a$ and $g$.
Lemma \ref{th:balk} completes the proof in this case.

{\bfseries Condition (\ref{item:pcs}).}
Suppose that $J(f)\not\subset\bigcap_{j\in\bN}
\overline{\bigcup_{k\ge j}f^k(B_{f^k})}$.
By the closedness of $\bigcap_{j\in\bN}
\overline{\bigcup_{k\ge j}f^k(B_{f^k})}$ and Lemma \ref{th:stronger},
we indeed have $J(f)\not\subset(\cE(f)\cup\bigcap_{j\in\bN}
\overline{\bigcup_{k\ge j}f^k(B_{f^k})})$.
Hence we can fix $N\in\bN$ so large that the open subset
$U_N:=\bM\setminus(\cE(f)\cup\overline{\bigcup_{k\ge N}f^k(B_{f^k})})$
in $\bM$ intersects $J(f)$.

Let $a\in (J_1(f)\cup J_2(f))\cap U_N\subset(J_1(f)\cup J_2(f))\setminus\cE(f)$,
and let $g(v)=\lim_{m\to\infty}f^{k_m}\circ\phi^{-1}(x_m+\rho_mv)$ 
be a non-constant quasiregular mapping from $X$
to $\bM$
as in Lemma \ref{lem:g} associated to this $a$. 
Then $\#(\bM\setminus g(X))<\infty$ by Theorem \ref{th:puncture}.
We claim that $\#\bigcup_{k\ge N}f^{-k}(a)=\infty$. Indeed,
in the case $\#\bigcup_{k=0}^{N-1}f^{-k}(a)<\infty$, this follows by $a\not\in\cE(f)$.
In the case $\#\bigcup_{k=0}^{N-1}f^{-k}(a)=\infty$, we have $S_f\neq\emptyset$.
By applying the big Picard-type theorem (Theorem \ref{th:bP}) in at most $N$ times, we obtain $\#f^{-N}(a)=\infty$. Hence
we can fix $j_1\ge N$ such that $f^{-j_1}(a)\cap g(X)\neq\emptyset$, and 
a subdomain $U\Subset U_N$ containing $a$ so small that
some component $V$ of $(f^{j_1}\circ g)^{-1}(U)$ is relatively compact in $X$.
Then $g\colon V\to g(V)$ is proper.

By the uniform convergence \eqref{eq:juliarescaling} on $\overline{V}$,
for every $m\in\bN$ large enough, 
$f^{j_1}\circ f^{k_m}\circ\phi^{-1}(x_m+\rho_m V)\Subset U_N$.
Then by $j_1\ge N$ and the definition of $U_N$,
$f^{k_m}:\phi^{-1}(x_m+\rho_m V)\to f^{k_m}(\phi^{-1}(x_m+\rho_m V))$ is univalent, 
so the mapping $v\mapsto f^{k_m}\circ\phi^{-1}(x_m+\rho_m v)$ from $V$
onto its image is univalent. Hence
by the locally uniform convergence \eqref{eq:juliarescaling} on $X$ and
the Hurwitz-type theorem (Lemma \ref{lemma:Hurwitz}),  
$V\cap B_g=\emptyset$. Then 
$\emptyset\neq f^{-j_1}(a)\cap g(V)\subset J(f)\cap g(X\setminus B_g)$,
and \eqref{eq:unramify} is satisfied by these $a$ and $g$.
Lemma \ref{th:balk} completes the proof in this case.

{\bfseries Condition (\ref{item:surface}).}
Suppose that $n=2$. 
If $\#\bigcup_{k\ge 0}f^{-k}(S_f)<\infty$, then by Lemmas \ref{th:dense} and \ref{th:stronger},
$J_1(f)=J(f)\setminus J_2(f)$ is uncountable. Since
$\#\cE(f)<\infty$ (in Lemma \ref{th:stronger})
and $\bigcup_{k\ge 0}B_{f^k}$ is countable (when $n=2$),
we may fix $a\in J_1(f)\setminus(J_2(f)\cup\cE(f)\cup\bigcup_{k\in\bN}f^k(B_{f^k}))\subset
J_1(f)\setminus\cE(f)$. Let $g\colon \R^n\to\bM$ be a 
non-constant quasiregular mapping as in Lemma \ref{lem:g} associated to this $a$. 
By the countability of $B_g$ (when $n=2$) and the uncountability of $g^{-1}(J(f))$,
we also have $g^{-1}(J(f))\not\subset B_g$. 
The unramification condition \eqref{eq:unramify} is satisfied by these $a$ and $g$,
and Lemma \ref{th:balk} completes the proof in this case.

In the remaining case $\#\bigcup_{k\ge 0}f^{-k}(S_f)=\infty$, 
the argument similar to the above does not work.
For $n=2$, instead of Lemma \ref{th:balk},
we rely on the big versions (Lemmas \ref{th:bigNevanfamily} and \ref{th:bigNevan}) 
of the Nevanlinna four totally ramified value theorem (Theorem \ref{th:Nevanlinna})
to show Theorem $\ref{th:repdense}$ under $n=2$,
which is independent of the above proof specific to the case $\#\bigcup_{k\ge 0}f^{-k}(S_f)<\infty$.

\begin{proof}[Proof of Theorem $\ref{th:repdense}$ under $n=2$]
Set 
\begin{gather*}
 J'(f)
:=\begin{cases}
J_1(f)\setminus\{\text{all periodic points of }f\} & \text{if }
\#\bigcup_{k\ge 0}f^{-k}(S_f)<\infty,\\
J_2(f) & \text{if }\#\bigcup_{k\ge 0}f^{-k}(S_f)=\infty.
	\end{cases}
\end{gather*}

We claim that $J'(f)$ is dense in $J(f)$. If $\#\bigcup_{k\ge  
0}f^{-k}(S_f)=\infty$, we have $J(f)=\overline{J_2(f)}=\overline{J'(f)}$ by  
Lemma \ref{th:dense}. Thus we may assume that $\#\bigcup_{k\ge 0}f^{-k}(S_f)<\infty$ and it suffices to show that $J(f)=\overline{J'(f)}$.

By Lemmas \ref{th:dense} and \ref{th:stronger}, the set $J_1(f)$ is  
uncountable. Since $f$ has at most countably many periodic points,  
$J'(f)$ is non-empty. Let $y\in J'(f)$. If  
$J(f)\not\subset\overline{J'(f)}$, then every point in  
$J(f)\setminus\overline{J'(f)}$ is accumulated by $\bigcup_{k\ge  
0}f^{-k}(y)$ by Lemma \ref{th:backward}. On the other hand, by Lemma  
\ref{th:dense}, $\#J_2(f)<\infty$. Since $J_1(f)=J(f)\setminus  
J_2(f)$, there exists $x\in\bigcup_{k\ge  
0}f^{-k}(y)\cap(J_1(f)\setminus\overline{J'(f)})$. Thus $x$ is a  
periodic point of $f$, and so is $y$, which is a contradiction. Hence  
$J(f)=\overline{J'(f)}$ in the case $\#\bigcup_{k\ge 0}f^{-k}(S_f)<\infty$.

Since $J(f)$ is perfect, $\#J'(f)=\infty$. 
Fix an open subset $U$ in $\bM$ intersecting $J(f)$.
We claim that there exists $a\in J'(f)$
such that $\#(U\cap\bigcup_{k\ge 0}(f^{-k}(a)\setminus B_{f^k}))=\infty$.
Indeed, let $E\subset J'(f)$ such that $4<\# E<\infty$ and let $b'\in U\cap(J_1(f)\cup J_2(f))$.
For $b'\in J_1(f)$, $\{f^k;k\ge N\}$ is not normal at $b'$ for any $N\in\bN$.
Hence $b'\in\bigcap_{N\in\bN}\overline{\bigcup_{k\ge N}(f^{-k}(E)\setminus B_{f^k})}$ by Lemma \ref{th:bigNevanfamily}. 
Moreover, if $b'\in f^{-k}(E)$ for infinitely many $k\in\bN$,
then, by $\# E<\infty$, $f^{k_1}(b')=f^{k_2}(b')\in E$ for some $k_1<k_2$. Thus $f^{k_1}(b')\in E$ is a periodic point of $f$,
which contradicts $E\subset J'(f)$.
Hence $b'$ is accumulated by 
$\bigcup_{k\ge 0}(f^{-k}(E)\setminus B_{f^k})$.
In the case $b'\in J_2(f)$, $b'$ is an isolated essential singularity of 
$f^{j_1}$ for some $j_1\in\bN$, so
by Lemma \ref{th:bigNevan}, 
$b'$ is accumulated by $f^{-j_1}(E)\setminus B_{f^{j_1}}$.
In both cases, by $\# E<\infty$, we can choose $a\in E$ such that
$\#(U\cap\bigcup_{k\ge 0}(f^{-k}(a)\setminus B_{f^k}))=\infty$.

Let $g(v)=f^{k_m}\circ\phi^{-1}(x_m+\rho_mv)$
be a non-constant quasiregular mapping from $X$ to $\bM$
as in Lemma \ref{lem:g} associated to this $a$,
where $X$ is either $\R^2$ or $\R^2\setminus\{0\}$
and $\phi:D\to \R^2$ is a coordinate chart of $\bM$ at $a$. 
Then by the Nevanlinna four totally ramified value theorem 
(Theorem \ref{th:Nevanlinna}), 
\begin{gather*}
 \left(U\cap\bigcup_{k\ge 0}(f^{-k}(a)\setminus B_{f^k})\right)\cap g(X\setminus B_g)\neq\emptyset.
\end{gather*}
Hence we can choose $j_1\in\bN\cup\{0\}$
and a subdomain $D_1\Subset D$ containing $a$ such that
some component $U_1$ of $f^{-j_1}(D_1)$ is relatively compact in $U\setminus B_{f^{j_1}}$ and
that some component $V_1$ of $g^{-1}(U_1)$ is relatively compact in $X\setminus B_g$.
Then $f^{j_1}\circ g:V_1\to D_1$ is univalent.

By the same argument 
in the proof of Lemma \ref{lem:density}, for every $m\in \bN$ large enough, 
we can choose $v_m\in V_1$ such that $y_m:=\phi^{-1}(x_m+\rho_m v_m)$ 
is a fixed point of $f^{j_1}\circ f^{k_m}$, and so is $f^{k_m}(y_m)$,
and we also have $f^{k_m}(y_m)\in U$ for every $m\in \bN$ large enough.

Moreover, 
by the locally uniform convergence \eqref{eq:juliarescaling} on $X$ and 
Lemma \ref{lemma:Hurwitz},
the mapping $v\mapsto f^{j_1}\circ f^{k_m}\circ\phi^{-1}(x_m+\rho_m v)$
is also a univalent mapping from $V_1$ onto its image for every $m\in \bN$ large enough.
Hence
\begin{gather*}
 f^{j_1}\circ f^{k_m}:\phi^{-1}(x_m+\rho_m V_1)\to
 f^{j_1}\circ f^{k_m}(\phi^{-1}(x_m+\rho_m V_1))
\end{gather*}
is univalent for $m\in \bN$ large enough. By the uniform convergence 
$\lim_{m\to\infty}\phi^{-1}(x_m+\rho_mv)=a\in D_1=f^{j_1}\circ g(V_1)$ 
on $v\in\overline{V_1}$ and the uniform convergence \eqref{eq:juliarescaling} on $\overline{V_1}$,
\begin{gather*}
 \phi^{-1}(x_m+\rho_m V_1)\Subset f^{j_1}\circ f^{k_m}(\phi^{-1}(x_m+\rho_m V_1)).
\end{gather*}
for every $m\in\bN$ large enough. Hence 
$y_m$ is a repelling fixed point of $f^{j_1}\circ f^{k_m}$ for every $m\in \bN$ large enough. 

We conclude that $J(f)$ 
is in the closure of the set of all repelling periodic points of $f$,
so the perfectness of $J(f)$ completes the proof.
\end{proof}

\section{On the non-injectivity and non-elementarity of $f$}
\label{sec:discussion}

In the setting of Theorem \ref{th:periodic}, we have the following result on the non-elementarity of non-injective UQR-mappings.
\begin{lemma}
Let $\bM$ and $f\colon \bM \setminus S_f \to \bM$ be as in Theorem \ref{th:periodic}. Suppose in addition that $f$ is non-injective. Then $f$ is non-elemenatary if either $S_f =\emptyset$ or $\#\bigcup_{k\ge 0}f^{-k}(S_f)>q'(n,K)$.
\end{lemma}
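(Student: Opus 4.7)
The plan is to reduce the non-elementarity assertion to the statement $J(f)\not\subset\cE(f)$, since non-injectivity is already assumed, and then handle the two cases separately using the properties of $\cE(f)$ already established in Lemma \ref{lem:finiteness}.

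First I would observe that in both cases $J(f)\neq\emptyset$: in the case $S_f=\emptyset$ this is the content of Lemma \ref{lem:finiteness} for non-injective $f$, while in the case $\#\bigcup_{k\ge 0}f^{-k}(S_f)>q'(n,K)\ge 0$ the union $\bigcup_{k\ge 0}f^{-k}(S_f)$ is already non-empty and contained in $J(f)$. The key containment to record here is $\bigcup_{k\ge 0}f^{-k}(S_f)\subset J(f)$: since $\Omega=\bM\setminus S_f$, we have $S_f\subset \bM\setminus D_f\subset J(f)$ (as $F(f)\subset D_f$), and then the invariance $f^{-1}(J(f))\subset J(f)$ from Lemma \ref{th:invariance} propagates this to all iterated preimages.

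For the case $S_f=\emptyset$, Lemma \ref{lem:finiteness} gives $\cE(f)\subset F(f)$. Since $J(f)\cap F(f)=\emptyset$ and $J(f)\neq\emptyset$, we conclude $J(f)\cap \cE(f)=\emptyset$, which in particular yields $J(f)\not\subset\cE(f)$. For the case $\#\bigcup_{k\ge 0}f^{-k}(S_f)>q'(n,K)$, we have $S_f\neq\emptyset$, so Lemma \ref{lem:finiteness} gives $\#\cE(f)\le q'(n,K)$. If we had $J(f)\subset\cE(f)$, then the containment above would force
\[
\#\bigcup_{k\ge 0}f^{-k}(S_f)\le \#\cE(f)\le q'(n,K),
\]
contradicting the hypothesis.

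In either case, combined with the given non-injectivity of $f$, we obtain that $f$ is non-elementary. There is no real obstacle here; the only subtle point is to make sure the containment $\bigcup_{k\ge 0}f^{-k}(S_f)\subset J(f)$ is justified using $S_f=\bM\setminus\Omega$ and the $f^{-1}$-invariance of $J(f)$, so that the finite-cardinality bound on $\cE(f)$ from Lemma \ref{lem:finiteness} can be played against the cardinality hypothesis.
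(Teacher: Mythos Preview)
Your proof is correct. For the case $S_f=\emptyset$ you argue exactly as the paper does (via Lemma \ref{lem:finiteness}, which is the content behind Theorem \ref{th:periodic}(b)).

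In the second case your route is genuinely more direct than the paper's. The paper first invokes the big Picard-type theorem (Theorem \ref{th:bP}) to upgrade $\#\bigcup_{k\ge 0}f^{-k}(S_f)>q'(n,K)$ to $\#\bigcup_{k\ge 0}f^{-k}(S_f)=\infty$, then appeals to Lemma \ref{th:dense} to get $J(f)=\overline{\bigcup_{k\ge 0}f^{-k}(S_f)}$, and finally concludes from $\#\cE(f)<\infty$. You bypass both steps: you use only the containment $\bigcup_{k\ge 0}f^{-k}(S_f)\subset J(f)$ (justified exactly as you say, via $S_f\subset\bM\setminus D_f\subset J(f)$ and Lemma \ref{th:invariance}) together with the sharp bound $\#\cE(f)\le q'(n,K)$ from Lemma \ref{lem:finiteness}, and compare cardinalities directly. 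Your argument is more elementary and uses strictly fewer ingredients; the paper's detour does, however, yield the additional information that in this case the iterated preimage of $S_f$ is actually infinite and dense in $J(f)$, which is not needed for the lemma at hand but is of independent interest.
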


\begin{proof}
For $S_f=\emptyset$ the claim follows from Theorem \ref{th:periodic}. Suppose $\#\bigcup_{k\ge 0}f^{-k}(S_f)>q'(n,K)$. By the big Picard-type theorem (Theorem \ref{th:bP}), we have $\#\bigcup_{k\ge 0}f^{-k}(S_f)=\infty$. Thus,
by Lemma \ref{th:dense}, $J(f)=\overline{\bigcup_{k\ge 0}f^{-k}(S_f)}$.
Hence $J(f)\not\subset\cE(f)$ since $\#\cE(f)<\infty$.
\end{proof}

%By Theorem \ref{th:periodic}, 
%the condition $S_f=\emptyset$ is sufficient for a non-injective $f$ to be non-elementary.
%Indeed, so is also another condition $\#\bigcup_{k\ge 0}f^{-k}(S_f)>q'(n,K)$.
%Indeed, if $\#\bigcup_{k\ge 0}f^{-k}(S_f)>q'(n,K)$, then $S_f\neq\emptyset$,
%so by the big Picard-type theorem (Theorem \ref{th:bP}),
%we have $\#\bigcup_{k\ge 0}f^{-k}(S_f)=\infty$. Then
%by Lemma \ref{th:dense}, $J(f)=\overline{\bigcup_{k\ge 0}f^{-k}(S_f)}$.
%Hence $J(f)\not\subset\cE(f)$ since $\#\cE(f)<\infty$.

It seems an interesting problem whether a non-injective $f$ 
is always non-elementary. 
This is the case in holomorphic dynamics, i.e., the case that $\bM=\bS^2$ and $K=1$. Indeed, if $0<\#\bigcup_{k\ge 0}f^{-k}(S_f)\le q'(2,1)=2$, 
$f$ can be normalized to be either a transcendental entire function on $\bC$
or a holomorphic endomorphism of $\bC\setminus\{0\}$
having essential singularities at $0,\infty$, both of which are
known to be non-elementary.

\end{document}